\numberwithin{equation}{section}
\newtheorem{teo}{Theorem }[section]
\newtheorem{lem}[teo]{Lemma}
\newtheorem{prop}[teo]{Proposition}
\newtheorem{rem}{Remark}
\def \e{e^{2bx}}
\begin{document}

%%%%%%%%%%%%%%%%%%%%%%%%%%%

\title[KdV-BURGERS ]
      { THE 2D ZAKHAROV-KUZNETSOV-BURGERS  EQUATION ON A STRIP }
\author[ N.~A. Larkin]{Nikolai A. Larkin$^{\dag}$}

\address
{
Departamento de Matem\'atica, Universidade Estadual
de Maring\'a, Av. Colombo 5790: Ag\^encia UEM, 87020-900, Maring\'a, PR, Brazil
}

\email{$^{\dag}$nlarkine@uem.br}

\keywords {KdV-Burgers equation , Dispersive equations, Exponential
Decay}
\thanks{}
%\thanks{ N. A. Larkin was supported by Funda\c{c}\~ao
%Arauc\'aria, Estado do Paran\'a, Brasil} \subjclass[]{}

\thanks{MSC2010 35Q53;35B35}

\begin{abstract}
An initial-boundary value problem for the  2D
Zakharov-Kuznetsov-Burgers equation posed  on a channel-type strip
was considered. The existence and uniqueness results for regular and
weak  solutions in weighted spaces as well as exponential decay of
small solutions without restrictions on the width of a strip were
proven both for regular solutions in an elevated norm and for weak
solutions in  the $L^2$-norm.
\end{abstract}

\maketitle

\section{Introduction}\label{introduction}

We are concerned with an initial-boundary value problem (IBVP) for
the two-dimensional Zakharov-Kuznetsov-Burgers (ZKB) equation
\begin{equation}\label{kdvb}
u_t+u_x-u_{xx}+uu_x +u_{xxx}+u_{xyy}=0\
\end{equation}
posed on a strip modeling an infinite channel
$\{(x,y)\in\mathbb{R}^2:\ x\in \mathbb{R},\,y \in (0,B), \, B>0\}.$
This equation is a two-dimensional analog of the well-known
Korteweg-de Vries-Burgers (KdV) equation
\begin{equation}\label{kdv}
u_t+u_x-u_{xx}+uu_x+u_{xxx}=0
\end{equation}
 which includes dissipation and dispersion and has been
studied by various researchers due to its applications in Mechanics
and Physics \cite{bona1,bona2,marcelo}.
 One can find  extensive bibliography and sharp results on
 decay rates of solutions to the Cauchy problem (IVP) for (1.2) in
\cite{bona1}. Exponential decay of solutions to the initial problem
for \eqref{kdv} with additional damping has been  established in
\cite{marcelo}. Equations \eqref{kdvb} and \eqref{kdv} are typical
examples of so-called dispersive equations which attract
considerable attention of both pure and applied mathematicians in
the past decades.

 Quite recently, the interest on
dispersive equations became to be extended to  multi-dimensional
models such as Kadomtsev-Petviashvili (KP) and Zakharov-Kuznetsov
(ZK) equations \cite{zk}. As far as the ZK equation and its
generalizations are concerned, the results on IVPs
 can be found in
\cite{fam1,farah,pastor1,pastor2,sautlin,ribaud,temam2} and IBVPs
were studied in \cite{dorlar1,fam2,familark,lartron,lar1,temam2}. In
\cite{lartron,lar1} was shown that IBVP for the ZK equation posed on
a half-strip unbounded in $x$ direction with the Dirichlet
conditions on the boundaries  possesses regular solutions which
decay exponentially as $t\to \infty$ provided initial data are
sufficiently small and the width of a half-strip is not too large.
This means that the ZK equation may create an internal dissipative
mechanism for some types of IBVPs. \par
 The goal of our note is to prove
that the ZKB equation on a strip also may create a dissipative
effect without adding any artificial damping. We must mention that
IBVP for the ZK equation on a strip $(x\in(0,1),\,y\in\mathbb{R})$
has been studied in \cite{dorlar1,temam1} and IBVPs on a strip
$(y\in(0,L),\,x\in \mathbb{R})$  for the ZK equation were considered
in \cite{fambayk} and for the ZK equation with some internal damping
in \cite{fam3}. In the domain $(y\in(0,B),\,x\in\mathbb{R},\,t>0 )$,
the term $u_x$ in \eqref{kdvb} can be  scaled out by a simple change
of variables. Nevertheless, it can not be safely ignored for
problems posed  both on finite and semi-infinite intervals as well
as on infinite in $y$ direction bands without changes in the
original domain \cite{dorlar1,rozan}.

The main results of our paper are the existence  and uniqueness of
regular and weak  global-in-time solutions for  \eqref{kdvb} posed
on a strip with the Dirichlet boundary conditions  and the
exponential decay rate of these solutions as well as continuous
dependence on initial data.

The paper has the following structure. Section 1 is Introduction.
Section \ref{problem} contains formulation of the problem. In
Section \ref{regexist}, we prove  global existence and uniqueness
theorems for regular solutions in some weighted spaces and
continuous dependence on initial data. In Section \ref{regdecay}, we
prove exponential decay of small regular solutions in an elevated
norm corresponding to the $H^1(\mathcal{S})$-norm. In Section
\ref{weak},
 we prove the existence, uniqueness and continuous dependence on initial data for weak solutions as well
  as the exponential decay rate of the $L^2(\mathcal{S})$-norm for small solutions
without limitations on the width of the strip.

\section{Problem and preliminaries}\label{problem}

Let $B,T,r$ be finite positive numbers. Define
$\mathcal{S}=\{(x,y)\in\mathbb{R}^2:\ x\in\mathbb{R},\ y\in(0,B)\};$
$\mathcal{S}_r=\{(x,y)\in\mathbb{R}^2:\ x\in (-r,+\infty),\,
y\in(0,B)\}$ and $\mathcal{S}_T=\mathcal{S}\times (0,T).$

Hereafter subscripts $u_x,\ u_{xy},$ etc. denote the partial
derivatives, as well as $\partial_x$ or $\partial_{xy}^2$ when it is
convenient. Operators $\nabla$ and $\Delta$ are the gradient and
Laplacian acting over $\mathcal{S}.$ By $(\cdot,\cdot)$ and
$\|\cdot\|$ we denote the inner product and the norm in
$L^2(\mathcal{S}),$ and $\|\cdot\|_{H^k}$ stands for norms in the
$L^2$-based Sobolev spaces. We will use also the spaces $H^s\cap
L^2_b$, where $L^2_b=L^2(e^{2bx}dx)$, see \cite{kato}.

Consider the following  IBVP:
\begin{align}
&Lu\equiv u_t-u_{xx}+uu_x+u_{xxx}+u_{xyy}=0,\ \ \text{in}\
\mathcal{S}_T; \label{2.1}
\\
&u(x,0,t)= u(x,B,t)=0,\; x\in \mathbb{R},\ t>0; \label{2.2}
\\
&u(x,y,0)=u_0(x,y),\ \ (x,y)\in\mathcal{S}. \label{2.3}
\end{align}

\section{Existence of regular solutions}\label{regexist}

{\bf Approximate solutions}. We will construct  solutions to
\eqref{2.1}-\eqref{2.3} by the Faedo-Galerkin method: let ${w_j(y)}$
be orthonormal in $L^2(\mathcal{S})$ eigenfunctions of the following
Dirichlet problem:
\begin{align}
&w_{jyy}+\lambda_j w_j=0, \,y\in (0,B); \label{2.4} \\
&w_j(0)=w_j(B)=0. \label{2.5}
\end{align}

Define approximate solutions of \eqref{2.1}-\eqref{2.3} as follows:
\begin{equation}
 u^N(x,y,t)=\sum^N_{j=1} w_j(y)g_j(x,t), \label{UN}
\end{equation}
where $g_j(x,t)$ are solutions to the following Cauchy problem for
the system of $N$ generalized Korteweg-de Vries equations:
\begin{align}
&\frac{\partial}{\partial t}g_j(x,t)+\frac{\partial^3}{\partial
x^3}g_j(x,t)-\frac{\partial^2}{\partial x^2}g_j(x,t)-\lambda_j\frac{\partial}{\partial x} g_{j}(x,t) \nonumber\\
&+\int^B_0 u^N(x,y,t)u^N_x(x,y,t)w_j(y)\, dy
=0, \label{2.6}\\
&g_j(x,0)=\int^B_0 w_j(y)u_0(x,y)\,dy,\;j=1,...,N. \label{2.7}
\end{align}
It is known that for $g_j(x,0)\in H^s,\,s\geq 3,$ the Cauchy problem
\eqref{2.6}-\eqref{2.7} has a unique regular solution $g_j\in
L^{\infty}(0,T;H^s(\mathcal{S})\cap L^2_b(\mathcal{S}))\cap
L^2(0,T;H^{s+1}(\mathcal{S})\cap L^2_b(\mathcal{S}))$
\cite{bona1,kato,ponce}.
 To prove the existence of  global solutions for
 \eqref{2.1}-\eqref{2.3}, we need uniform in $N$  global in $t$
 estimates of approximate solutions $u^N(x,y,t).$\\
 {\bf  Estimate I.} Multiply the j-th equation of \eqref{2.6} by
 $g_j$, sum up over $j=1,...,N$ and integrate the result with respect
 to $x$ over $\mathbb{R}$ to obtain
 $$\frac{d}{dt}\|u^N\|^2(t)+2\|u^N_x\|^2(t)= 0\nonumber$$
 which implies
 \begin{equation}
\|u^N\|^2(t) +2\int_0^t\|u^N_x\|^2(s)\,ds=\|u^N_0\|^2\quad \forall t
\in (0,T). \label{FE}
\end{equation}

It follows from here that for $N$ sufficiently large and $\forall
t>0$

\begin{equation}
\|u^N\|^2(t)+2\int^t_0\ \|u^N_{x}\|^2(s)\,ds =\|u^N\|^2(0)\leq
2\|u_0\|^2. \label{E1}
\end{equation}
In our calculations we will drop the index $N$ where it is not
ambiguous.

{\bf  Estimate II.} For some positive $b$, multiply the j-th
equation of \eqref{2.6} by $e^{2bx}g_j$ , sum up over $j=1,...,N$
and integrate the result with respect  to $x$ over $\mathbb{R}.$
Dropping the index $N$, we get
\begin{align}
&\frac{d}{dt}(e^{2bx},u^2)(t)+(2+6b)(e^{2bx},u^2_x)(t)+2b(e^{2bx},u^2_y)(t)\nonumber\\
&-\frac{4b}{3}(e^{2bx},u^3)(t) -(2b^2+8b^3)(e^{2bx},u^2)(t)=0.
\label{2}
\end{align}

In our calculations, we will frequently use the following
multiplicative inequalities \cite{lady}:
\begin{prop} \label{GN}
i) For all $u \in H^1(\mathbb{R}^2)$
\begin{equation} {\|u\|}_{L^4(\mathbb{R}^2)}^2 \leq 2 {\|u\|}_{L^2(\mathbb{R}^2)}{\|\nabla u\|}_{L^2(\mathbb{R}^2)}.
 \label{p1}
\end{equation}
\qquad \qquad \qquad \qquad ii) For all $u \in H^1(D)$
\begin{equation} {\|u\|}_{L^4(D)}^2 \leq C_D {\|u\|}_{L^2(D)}{\|u\|}_{H^1(D)}, \label{p2}
\end{equation}
where the constant $C_D$ depends on a way of continuation of $u \in
 H^1(D)$ as $ \tilde{u}(\mathbb{R}^2)$ such that $\tilde{u}(D)=u(D).$
\end{prop}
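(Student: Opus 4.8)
The plan is to establish part~i) by the classical slicing argument of Ladyzhenskaya and then to obtain part~ii) from it by means of a bounded extension operator.

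First I would prove \eqref{p1} for $u\in C_0^\infty(\mathbb{R}^2)$. For such $u$ and each fixed $y$ one has $u^2(x,y)=2\int_{-\infty}^{x}u(s,y)u_x(s,y)\,ds$, hence $u^2(x,y)\le 2\int_{\mathbb{R}}|u(s,y)||u_x(s,y)|\,ds=:\varphi(y)$; symmetrically $u^2(x,y)\le 2\int_{\mathbb{R}}|u(x,s)||u_y(x,s)|\,ds=:\psi(x)$. Multiplying the two bounds and integrating over $\mathbb{R}^2$,
\[
\int_{\mathbb{R}^2}u^4\,dx\,dy\le\int_{\mathbb{R}^2}\varphi(y)\psi(x)\,dx\,dy=\Big(\int_{\mathbb{R}}\varphi(y)\,dy\Big)\Big(\int_{\mathbb{R}}\psi(x)\,dx\Big),
\]
and by the Cauchy--Schwarz inequality $\int_{\mathbb{R}}\varphi\,dy=2\int_{\mathbb{R}^2}|u||u_x|\le 2\|u\|\,\|u_x\|$ and $\int_{\mathbb{R}}\psi\,dx=2\int_{\mathbb{R}^2}|u||u_y|\le 2\|u\|\,\|u_y\|$. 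Therefore
\[
\|u\|_{L^4(\mathbb{R}^2)}^4\le 4\,\|u\|^2\,\|u_x\|\,\|u_y\|\le 4\,\|u\|^2\,\|\nabla u\|^2,
\]
and taking square roots yields \eqref{p1} with the constant $2$. A standard density argument (approximate $u\in H^1(\mathbb{R}^2)$ in the $H^1$-norm by functions in $C_0^\infty(\mathbb{R}^2)$, and note that both sides of \eqref{p1} are continuous under $H^1$-convergence) extends the inequality to all of $H^1(\mathbb{R}^2)$.

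For part~ii), fix a bounded linear extension operator $E:H^1(D)\to H^1(\mathbb{R}^2)$ with $(Eu)|_D=u$; for the strip $D=\{x\in\mathbb{R},\ y\in(0,B)\}$ one may take iterated even reflections in $y$ across the lines $y=0$ and $y=B$ followed by multiplication by a smooth cutoff in $y$, which is bounded simultaneously as a map $L^2(D)\to L^2(\mathbb{R}^2)$ and $H^1(D)\to H^1(\mathbb{R}^2)$, with norms depending only on $B$ (for the half-strip $\mathcal S_r$ one reflects and cuts off in $x$ as well). Writing $\tilde u=Eu$ and applying \eqref{p1},
\[
\|u\|_{L^4(D)}^2\le\|\tilde u\|_{L^4(\mathbb{R}^2)}^2\le 2\,\|\tilde u\|_{L^2(\mathbb{R}^2)}\,\|\nabla\tilde u\|_{L^2(\mathbb{R}^2)}\le 2\,\|\tilde u\|_{L^2(\mathbb{R}^2)}\,\|\tilde u\|_{H^1(\mathbb{R}^2)}\le C_D\,\|u\|_{L^2(D)}\,\|u\|_{H^1(D)},
\]
where $C_D$ absorbs the two operator norms of $E$; this is precisely the continuation-dependent constant appearing in \eqref{p2}.

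The argument is classical and presents no genuine obstacle; the only points needing care are the following. First, in part~i) one must bound $\|u_x\|\,\|u_y\|\le\|\nabla u\|^2$ directly (using $\|u_x\|,\|u_y\|\le\|\nabla u\|$) rather than by the sharper arithmetic--geometric mean estimate, since the latter would give the smaller constant $\sqrt2$ and hence a different normalization than the one stated. Second, in part~ii) the extension $E$ must be chosen so as to be bounded on $L^2$ \emph{and} on $H^1$ at the same time, so that the product structure $\|u\|_{L^2(D)}\,\|u\|_{H^1(D)}$ on the right-hand side of \eqref{p2} is preserved rather than degenerating to $\|u\|_{H^1(D)}^2$; the reflection construction above achieves this and makes the dependence of $C_D$ on $B$ (and, in the half-strip case, on $r$) explicit.
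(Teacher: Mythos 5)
Your proof is correct. The paper itself gives no proof of this proposition --- it simply cites Ladyzhenskaya--Solonnikov--Uraltseva \cite{lady} --- and your argument is exactly the classical slicing proof from that reference (the one-dimensional fundamental-theorem bound in each variable, multiplication, Fubini, and Cauchy--Schwarz) together with the standard reflection-and-cutoff extension for part~ii), with the constants handled correctly to match the stated normalization.
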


Extending $u^N(x,y,t)$ for a fixed $t$ into exterior of
$\mathcal{S}$ by 0 and exploiting the Gagliardo-Nirenberg inequality
\eqref{p1}, we find
 \begin{equation}
\frac{4b}{3}(e^{2bx}u^3)(t)\leq
b(e^{2bx},u^2_y)(t)\nonumber\\+2b(e^{2bx},u^2_x)(t)+2(b^3+\frac{8b}{9}\|u_0^N\|^2)(e^{2bx},u^2)(t).\label{nl}
\end{equation}
Substituting this into \eqref{2}, we come to the inequality
\begin{align}
&\frac{d}{dt}(e^{2bx},u^2)(t)+(2+4b)(e^{2bx},u^2_x)(t)+b(e^{2bx},u^2_y)(t)\nonumber\\
&\leq C(b)(1+\|u_0\|^2)(e^{2bx},u^2)(t). \label{2.9}
\end{align}
By the Gronwall lemma,
$$(e^{2bx},u^2)(t)\leq C(b,T,\|u_o\|)(e^{2bx},u^2_0).\nonumber$$
Returning to \eqref{2.9} gives
\begin{align}
(e^{2bx},|u^N|^2)(t)+\int_0^t (e^{2bx},|\nabla u^N|^2)(\tau )d
\tau\nonumber\\ \leq C(b,T,\|u_0\|)(e^{2bx},u^2_0)\quad \forall t
\in (0,T). \label{E2}
\end{align}
It follows from this estimate and (3.6) that uniformly in $N$ and
for any $r>0$ and $t \in (0,T)$
\begin{align}
 &\|u^N\|^2(t)+\int_0^t\int_0^B\int_{-r}^{+\infty}
|\nabla u^N|^2\,dx\,dy\,ds\nonumber\\& \leq
\mathbb{C}(r,b,T,\|u_0\|)(e^{2bx},u_0^2), \label{H1}
\end{align}
 where
$\mathbb{C}$ does not depend on $N$.

Estimates \eqref{E2}, \eqref{H1} make it possible to prove the
existence of a weak solution to \eqref{2.1}-\eqref{2.3} passing to
the limit in \eqref{2.6} as $N \to \infty$. For details of passing
to the limit  in the nonlinear term  see \cite{kato}.

{\bf  Estimate III.} Multiplying the j-th equation of \eqref{2.6} by
$-(\e g_{jx})_x$, and dropping the index $N$, we come to the
equality
\begin{align}
&\frac{d}{dt}(\e,
u_x^2)(t)+(2+6b)(\e,u_{xx}^2)(t)+2b(\e,u_{xy}^2)(t)\nonumber\\
&-(4b^2+8b^3)(\e,u_x^2)(t)+(\e,u_x^3)(t)-2b(\e
u,u_x^2)(t)=0.\label{3}
\end{align}
Making use of Proposition \ref{GN}, we estimate
\begin{align}
&I_1=(\e,u_x^3)(t)\leq
\|u_x\|(t)\|e^{bx}u_x\|^2(t)_{L^4(\mathcal{S})}\nonumber\\
&\leq 2\|u_x\|(t)\|e^{bx}u_x\|(t)\|\nabla(e^{bx} u_x)\|(t)\nonumber\\
&\leq \delta(\e,2u_{xx}^2+u_{xy}^2)(t)+2\big[\delta
b^2+\frac{\|u_x\|^2(t)}{2\delta}\big](\e,u_x^2)(t).\nonumber
\end{align}
Similarly,
\begin{align}
&I_2=2b(\e,uu_x^2)(t)\leq \delta(\e,2u_{xx}^2+u_{xy}^2)(t)\nonumber\\
&+\big[2b^2\delta+\frac{4b^2}{\delta}\|u_0\|^2(t)\big](\e,u_x^2)(t).\nonumber
\end{align}
Substituting $I_1,I_2$ into \eqref{3} and taking $2\delta=b$, we
obtain for $\forall t\in(0,T):$
\begin{align}
&(\e,|u^N_x|^2)(t)+\int_0^t(\e,|\nabla
u^N_x|^2)(s)\,ds\nonumber\\&\leq C(b,T,\|u_0\|)(\e,u_{0x}^2).
\label{E3}
\end{align}

{\bf  Estimate IV.} Multiplying the j-th equation of \eqref{2.6} by
$-2(\e\lambda g_{j})$, and dropping the index $N$, we come to the
equality
\begin{align}
&\frac{d}{dt}(\e,
u_y^2)(t)+(2+6b)(\e,u_{xy}^2)(t)+2b(\e,u_{yy}^2)(t)\nonumber\\
&-(4b^2+8b^3)(\e,u_y^2)(t)+2(1-b)(\e,u_x u_y^2)(t)=0.\label{4}
\end{align}
Making use of Proposition \ref{GN}, we estimate
\begin{align}
&I=2(1-b)(\e,u_x u_y^2)(t)\leq
2C_D(1\nonumber\\&+b)\|u_x\|(t)\|e^{bx}u_y\|(t)\|
(e^{bx}u_y)\|_{H^1(\mathcal{S})}(t)\nonumber\\
&\leq\delta(\e, 2u_{xy}^2+u_{yy}^2)(t)+\big[2\delta
(1+b^2)\nonumber\\&+\frac{C_D^2(1+b)^2\|u_x\|^2(t)}{\delta}\big](\e,u_y^2)(t).\nonumber
\end{align}
Taking $\delta=b,$ we transform \eqref{4} into the inequality
\begin{align}
&\frac{d}{dt}(\e,
u_y^2)(t)+(2+4b)(\e,u_{xy}^2)(t)+b(\e,u_{yy}^2)(t)\nonumber\\
&\leq C(b)[1+\|u_x\|(t)^2](\e,u_y^2)(t).\nonumber
\end{align}
Making use of \eqref{E1} and the Gronwall lemma, we get $\;\forall
t\in(0,T):$
\begin{equation}
(\e,|u^N_y|^2)(t)+\int_0^t(\e,|u^N_{yy}|^2)(s)\,ds\nonumber\\\leq
C(b,T,\|u_0\|)(\e,u_{0y}^2). \label{E4}
\end{equation}

This and \eqref{E3} imply that for all finite $r>0$ and all
$t\in(0,T)$
\begin{equation}
\|u^N\|(t)_{H^1(\mathcal{S}_r)}\leq C(r,T,\|u_0\|)(\e,|\nabla
u_0|^2). \label{Ert}
\end{equation}

{\bf  Estimate V.} Multiplying the j-th equation of \eqref{2.6} by
$(\e g_{jxx})_{xx}$, and dropping the index $N$, we come to the
equality
\begin{align}
&\frac{d}{dt}(\e,
u_{xx}^2)(t)+(2+6b)(\e,u_{xxx}^2)(t)+2b(\e,u_{xxy}^2)(t)\nonumber\\
&-(4b^2+8b^3)(\e,u_{xx}^2)(t)-2b(\e,u u_{xx}^2)(t)\nonumber\\&+5(\e
u_x,u_{xx}^2)(t)=0.\label{5}
\end{align}

Using \eqref{p1}, we find
\begin{align}
&I=-2b(\e,u u_{xx}^2)(t)+5(\e u_x,u_{xx}^2)(t)\nonumber\\
&\leq
2\delta(\e,2u_{xxx}^2+u_{xxy}^2)(t)+\big[4b^2\delta+\frac{25}{\delta}\|u_x\|(t)^2\nonumber\\
&+\frac{4b^2}{\delta}\|u\|^2(t)\big](\e,u_{xx}^2)(t).\nonumber
\end{align}
Taking $2\delta=b$ and substituting $I$ into \eqref{5}, we obtain
\begin{align}
&\frac{d}{dt}(\e,
u_{xx}^2)(t)+(2+4b)(\e,u_{xxx}^2)(t)+ b(\e,u_{xxy}^2)(t)\nonumber\\
&\leq C(b)[1+\|u_x\|^2(t)+\|u\|(t)^2](\e,u_{xx}^2)(t).\nonumber
\end{align}
Taking into account \eqref{E1}, we find
\begin{align}
&(\e,|u^N_{xx}|^2)(t)+\int_0^t(\e,|\nabla
u^N_{xx}|^2)(s)\,ds\nonumber\\\leq
&C(b,T,\|u_0\|)(\e,u_{0xx}^2)\quad \forall t\in(0,T).\label{E5}
\end{align}

{\bf  Estimate VI.} Differentiate \eqref{2.6} by $t$ and multiply
the result by $\e g{j_t}$ to obtain

\begin{align}
&\frac{d}{dt}(\e,
u_t^2)(t)+(2+6b)(\e,u_{xt}^2)(t)+2b(\e,u_{ty}^2)(t)\nonumber\\
&-(4b^2+8b^3)(\e,u_{t}^2)(t)+(2-2b)(\e u_x, u_{t}^2)(t)=0.\label{6}
\end{align}
 Making use of \eqref{p1}, we estimate
 \begin{align}
 &I=(2-2b)(\e u_x, u_{t}^2)(t)\leq
 2(2+2b)\|u_x\|(t)\|e^{bx}u_t\|(t)\|\nabla(e^{bx}
 u_t)\|(t)\nonumber\\
 &\delta(\e,2u_{xt}^2+u_{ty}^2)(t)+\big[2b^2\delta+\frac{(2+2b)^2\|u_x\|(t)^2}{\delta}\big](\e,u_t^2)(t).\nonumber
 \end{align}
 Taking $\delta=b$ and substituting $I$ into \eqref{6}, we obtain

\begin{align}
&\frac{d}{dt}(\e,
u_t^2)(t)+(2+4b)(\e,u_{xt}^2)(t)+b(\e,u_{ty}^2)(t)\nonumber\\
&\leq C(b)[1+\|u_x\|(t)^2](\e , u_{t}^2)(t).\nonumber
\end{align}

This implies \quad $\forall t\in 0,T)$:
\begin{align}
(\e,|u^N_t|^2)(t)+\int_0^t(\e,|\nabla
u^N_{s}|^2(s)\,ds\nonumber\\\leq C(b,T,\|u_0\|)(\e,u_{t}^2)(0)\leq
C(b,T,\|u_0\|)\|)J_0,\label{E6}
\end{align}
where
$$J_0=\|u_0\|^2+(\e,u^2_0+|\nabla u_0|^2+|\nabla u_{0x}|^2+u^2_0
u^2_{0x}+|\Delta u_{0x}|^2).$$

{\bf  Estimate VII.} Multiplying the j-th equation of \eqref{2.6} by
$-\e g_{jx}$, we come, dropping the index $N$, to the equality
\begin{align}
&(\e,[u_{xy}^2+u_{xx}^2])(t)=-(\e[u_{t}-(1+2b)u_{xx}],u_x)(t)\nonumber\\&+(\e,u
u^2_x)(t).\label{e7}
\end{align}
Making use of \eqref{p1}, we estimate
\begin{align}
&I=(\e,uu_x^2)(t)\leq \delta(\e,2u_{xx}^2+u_{xy}^2)(t)+
\big[2b^2\delta+\frac{\|u_0\|^2}{\delta}\big](\e,u_x^2)(t).\nonumber
\end{align}
Taking $4\delta=1$, using \eqref{E3}-\eqref{E6} and substituting $I$
into \eqref{e7}, we get

\begin{equation}
(\e,{u^N_{xx}}^2+{u^N_{xy}}^2)(t)\leq C(b,T,\|u_0\|)J_0 \quad
\forall t\in(0,T). \label{E7}
\end{equation}

{\bf  Estimate VIII.} We will need the following lemma :
\begin{lem} \label{supr}
Let $u(x,y): \mathcal{S}\to \mathbb{R}$ be such that
$$ \int_{\mathcal{S}}\e [u^2(x,y)+|\nabla
u(x,y)|^2+u_{xy}^2(x,y)]\,dxdy < \infty$$ and for all
$x\in\mathbb{R}$ there is some $y_0\in [0,B]$ such that
$u(x,y_0)=0.$ Then
\begin{align}
&\sup_{\mathcal{S}}|e^{bx}u(x,y,t)|^2 \leq
\delta(1+2b^2)(\e,u_y^2)(t)+
2\delta(\e,u_{xy}^2)(t)\nonumber\\
&+\frac{2\delta_1}{\delta}(\e,
u_x^2)(t)+\frac{1}{\delta}\big[\frac{1}{\delta_1}+2\delta_1
b^2\big](\e,u^2)(t),\label{esup}
\end{align}
where $\delta, \delta_1$ are arbitrary positive numbers.
\end{lem}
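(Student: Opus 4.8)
\emph{Proof strategy.} The plan is to bound $\sup_{\mathcal S}|e^{bx}u|^2$ in two stages: first take the supremum in $y$ at each fixed $x$ by exploiting the vanishing point $y_0(x)$, and then handle the remaining supremum in $x$ by integrating the $x$–derivative from $-\infty$, which is legitimate precisely because of the weighted integrability assumed in the lemma (note that for solutions of \eqref{2.1}--\eqref{2.3} one may simply take $y_0=0$ by the Dirichlet condition \eqref{2.2}). Concretely, fix $x$ and $y_0=y_0(x)$ with $u(x,y_0)=0$. Since $e^{2bx}u^2(x,y_0)=0$, the fundamental theorem of calculus in $y$ gives $e^{2bx}u^2(x,y)=\int_{y_0}^y 2e^{2bx}uu_\eta\,d\eta$, so $\sup_{y\in[0,B]}e^{2bx}u^2(x,y)\le 2\int_0^B e^{2bx}|u||u_y|\,dy$. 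Then Cauchy--Schwarz in $y$ and Young's inequality with the parameter $\delta$ yield
$$\sup_{y}e^{2bx}u^2(x,y)\le 2\Big(\int_0^B e^{2bx}u^2\,dy\Big)^{1/2}\Big(\int_0^B e^{2bx}u_y^2\,dy\Big)^{1/2}\le \frac1\delta\,p(x)+\delta\,q(x),$$
where $p(x)=\int_0^B e^{2bx}u^2\,dy$ and $q(x)=\int_0^B e^{2bx}u_y^2\,dy$.

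Next I would estimate $p(x)$ and $q(x)$ uniformly in $x$. The hypothesis gives $p,q\in L^1(\mathbb R)$, while differentiating under the integral sign gives $p',q'\in L^1(\mathbb R)$ with $\int_{\mathbb R}|p'|\le 2b(\e,u^2)+2\|e^{bx}u\|\,\|e^{bx}u_x\|$ and $\int_{\mathbb R}|q'|\le 2b(\e,u_y^2)+2\|e^{bx}u_y\|\,\|e^{bx}u_{xy}\|$, all of which are finite under the stated assumptions. Hence $p$ and $q$ are absolutely continuous and, being in $L^1$ with integrable derivative, tend to $0$ as $x\to-\infty$; therefore $p(x)=\int_{-\infty}^x p'(\xi)\,d\xi\le\int_{\mathbb R}|p'|$ and similarly for $q$. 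This produces, for every $x$,
$$p(x)\le 2b(\e,u^2)+2\|e^{bx}u\|\,\|e^{bx}u_x\|,\qquad q(x)\le 2b(\e,u_y^2)+2\|e^{bx}u_y\|\,\|e^{bx}u_{xy}\|.$$

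Finally I would rearrange the mixed products. Young's inequality gives $2\|e^{bx}u\|\,\|e^{bx}u_x\|\le 2\delta_1(\e,u_x^2)+\frac1{2\delta_1}(\e,u^2)$ and $2\|e^{bx}u_y\|\,\|e^{bx}u_{xy}\|\le 2(\e,u_{xy}^2)+\frac12(\e,u_y^2)$, and then the elementary inequalities $2b\le \frac1{2\delta_1}+2\delta_1 b^2$ (AM--GM) and $2b+\frac12\le 1+2b^2$ (equivalent to $(2b-1)^2\ge0$) upgrade these to $p(x)\le 2\delta_1(\e,u_x^2)+\big(\frac1{\delta_1}+2\delta_1 b^2\big)(\e,u^2)$ and $q(x)\le(1+2b^2)(\e,u_y^2)+2(\e,u_{xy}^2)$. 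Taking the supremum over $x$ in the first display and inserting these two bounds gives exactly \eqref{esup}.

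The only genuinely delicate step is the passage in the second paragraph: one must know $p,q\to0$ as $x\to-\infty$ in order to represent $p(x)=\int_{-\infty}^x p'$, and this is the sole place where the full weighted integrability of $u$, $\nabla u$ and $u_{xy}$ in the hypothesis is used. For the smooth Galerkin approximations $u^N$ (to which the lemma is applied in Estimate VIII) all the manipulations — the fundamental theorem of calculus in $y$, differentiation under the integral, absolute continuity — are automatic; for a general $u$ satisfying only the stated integrability one concludes by a routine density argument. Everything else is Cauchy--Schwarz, Young's inequality, and the two scalar inequalities above.
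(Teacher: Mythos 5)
Your proof is correct and follows essentially the same route as the paper: the paper's one\nobreakdash-line ``simple calculations'' for $v=e^{bx}u$ are exactly the two-directional fundamental-theorem-of-calculus/Cauchy--Schwarz/Young argument you carry out, first in $y$ from the vanishing point $y_0(x)$ and then in $x$ via the integrable derivative of $p$ and $q$. Your version merely makes explicit the details the paper suppresses (the decay of $p,q$ at infinity and the elementary inequalities $2b\le\frac{1}{2\delta_1}+2\delta_1 b^2$ and $2b+\frac12\le 1+2b^2$), and it lands precisely on \eqref{esup}.
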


\begin{proof} Denote $v=e^{bx}u.$ Then simple calculations give
\begin{align}
&\sup_{\mathcal{S}} v^2(x,y,t)\leq
\delta[\|v_y\|^2(t)+\|v_{xy}\|^2(t)] +\frac{1}{\delta}[\|v_x\|^2(t)+
\|v\|^2(t)].\nonumber
\end{align}
Returning to the function $u(x,y,t)$, we prove Lemma \ref{supr}
\end{proof}

Multiplying the j-th equation of \eqref{2.6} by $\e g_{jxxx}$, we
come, dropping the index $N$, to the equality
\begin{align}
&(\e,
u_{xxy}^2+u_{xxx}^2)(t)=-(\e[u_t-u_{xx}],u_{xxx})(t)\nonumber\\&-(\e
uu_x,u_{xxx})(t)+2b^2(\e,u_{xy}^2)(t).\label{8}
\end{align}

Using Lemma \ref{supr} and \eqref{E1}, we estimate
\begin{align}
&I=(\e uu_x,u_{xxx})(t)\leq
\|u\|(t)\sup_{\mathcal{S}}|e^{bx}u_x(x,y,t)|\|e^{bx}u_{xxx}\|(t)\nonumber\\
&\leq
\epsilon\|u_0\|^2(\e,u_{xxx}^2)(t)+\frac{1}{4\epsilon}\big[\frac{1}{\delta}(1+2b^2)(\e,u_x^2)(t)\nonumber\\
&+\frac{2}{\delta}(\e
u_{xx}^2)(t)+\delta(1+2b^2)(\e,u_{xy}^2)(t)+2\delta(\e,u_{xxy}^2)(t)\big].
\end{align}

Taking $\epsilon$ and $\delta$ sufficiently small, positive and
substituting $I$ into \eqref{8}, we find
\begin{equation}
(\e, |\nabla u^N_{xx}|^2)(t)\leq C(b,T,\|u_0\|)J_0 \quad \forall
t\in(0,T). \label{E81}
\end{equation}

Consequently, it follows from the equality
$$ -(\e [u^N_t-u^N_{xx}+u^N_{xxx}+u^N_{xyy}+u^N
u^N_x],u^N_{yy})(t)=0
$$
and from

$$ (\e [u^N_t-u^N_{xx}+u^N_{xxx}+u^N_{xyy}+u^N
u^N_x],u^N_{xyy})(t)=0
$$
 that
\begin{equation}
(\e, |u^N_{yy}|^2+|u^N_{xyy}|^2)(t)\leq C(b,T,\|u_0\|)J_0 \quad
\forall t\in(0,T). \label{E82}
\end{equation}

Jointly, estimates \eqref{E3},\eqref{E4}, \eqref{E5},
\eqref{E7},\eqref{E81}, \eqref{E82} read
\begin{align}
&(\e, |u^N|^2+|\nabla u^N|^2 +|\nabla u^N_x|^2+|\nabla
u^N_y|^2+|\nabla u_{xx}^N|^2)(t)\nonumber\\&\leq
C(b,T,\|u_0\|)J_0\quad \forall t\in(0,T). \label{E83}
\end{align}
In other words,
\begin{equation}
e^{bx}u^N,\quad e^{bx}u^N_x\in L^{\infty}(0,T;H^2(\mathcal{S}))
\label{E84}
\end{equation}
and these inclusions are uniform in $N$.

{\bf  Estimate IX.} Differentiating the j-th equation of \eqref{2.6}
with respect to $x$ and multiplying the result by  $\e
\partial_x^4 g_j$, we come, dropping the index $N$, to the equality
\begin{align}
&(\e,
u_{xxxy}^2+u_{xxxx}^2)(t)=+2b^2(\e,u_{xxy}^2)(t)-(\e[u_{xt}-\partial_x^3
u],u_{xxxx})t)\nonumber\\&-(\e[u_x^2+uu_{xx}],\partial_x^4
u)(t).\label{e9}
\end{align}

Making use of Lemma \ref{supr} and \eqref{E83}, we estimate
\begin{align}
&I_1= (\e,u_x^2,\partial_x^4 u)(t)\leq
\|u_x\|(t)\|e^{bx}\partial_x^4
u\|(t)\sup_{\mathcal{S}}|e^{bx}u_x(x,y,t)|\nonumber\\
&\leq \frac{\epsilon_1}{2}(\e,|\partial_x^4
u|^2)(t)+\frac{1}{2\epsilon_1}\|u_x\|^2(t)\big[(1+2b^2)(\e,u_x^2)(t)\nonumber\\
&+2(\e,u_{xx}^2)(t)+(1+2b^2)(\e,u_{xy}^2)(t)+2(\e,u_{xxy}^2)(t)\big]\nonumber\\
&\leq \frac{\epsilon_1}{2}(\e,|\partial_x^4
u|^2)(t)+\frac{1}{2\epsilon_1}C(b,T,\|u_0\|)J_0,\nonumber
\end{align}

\begin{align}
&I_2=(\e u,u_{xx}\partial_x^4 u)(t)\leq\|e^{bx}\partial_x^4
u\|(t)\|u\|(t)\sup_{\mathcal{S}}|e^{bx}u_{xx}(x,y,t|\nonumber\\
&\leq\frac{\epsilon_1}{2}\|u_0\|^2(t)(\e,|\partial_x^4
u|^2)(t)+\frac{1}{2\epsilon_1}\{2\delta(\e,u_{xxxy}^2)(t)\nonumber\\
&+\delta(1+2b^2)(\e,u_{xxy}^2)(t)+\frac{2}{\delta}(\e,u_{xxx}^2)(t)\nonumber\\
&+\frac{1}{\delta}(1+2b^2)(\e,u_{xx}^2)(t)\}.
\end{align}

Applying the Young inequality, taking $\epsilon_1,\;\delta$
sufficiently small positive, substituting $I_1,I_2$ into \eqref{e9}
and integrating the result, we come to the following inequality:
\begin{equation}
\int_0^t(\e, |u^N_{xxxy}|^2+|u^N_{xxxx}|^2)(s)\,ds\leq
C(b,T,\|u_0\|)J_0 \:\;\forall t\in(0,T). \label{E9}
\end{equation}

{\bf  Estimate X.} Multiplying the j-th equation of \eqref{2.6} by
$-\e\lambda^2 g_{jx}$, we come, dropping the index $N$, to the equality
\begin{align}
&(\e,
u_{xxyy}^2+u_{xyyy}^2)(t)=-(\e,u_{ty},u_{xyyy}^2)(t)+(b+2b^2)(\e,u_{xyy}^2)(t)\nonumber\\&-(\e
u_y u_x,u_{xyyy})(t)+(\e uu_{xy},u_{xyyy})(t).\label{e10}
\end{align}
We estimate
\begin{align}
& I_1=-(\e,u_{ty},u_{xyyy})(t)\leq
\frac{\epsilon}{2}(\e,u_{xyyy}^2)(t)+\frac{1}{2\epsilon}(\e,u_{yt}^2)(t),\nonumber\\
&I_2=(\e u_y
u_x,u_{xyyy})(t)\leq\|u_x\|(t)\|e^{bx}u_{xyyy}\|(t)\sup_{\mathcal{S}}|e^{bx}u_y(x,y,t|\nonumber\\
&\leq
\frac{\epsilon}{2}(\e,u_{xyyy}^2)(t)+\frac{\|u_x\|(t)^2}{2\epsilon}\big[(1+2b^2)(\e,u_y^2)(t)\nonumber\\
&+2(\e,u_{xy}^2)(t)+(1+2b^2)(\e,u_{yy}^2)(t)
+2(\e,u_{xyy}^2)(t)\big],\nonumber\\
&I_3=(\e
uu_{xy},u_{xyyy})(t)\leq\|u\|(t)\|e^{bx}u_{xyyy})\|(t)\sup_{\mathcal{S}}|e^{bx}u_{xy}(x,y,t|\nonumber\\
&\leq
\frac{\|u_0\|^2\epsilon_1}{2}(\e,u_{xyyy}^2)(t)+\frac{1}{2\epsilon_1}\big[ 2\delta(\e, u_{xxyy}^2)(t)\nonumber\\
&+\frac{2}{\delta}(\e,u_{xxy}^2)(t)+\delta(1+2b^2)(\e,u_{xyy}^2)(t)\nonumber\\
&+\frac{1}{\delta}(1+2b^2)(\e,u_{xy}^2)(t)\big].\nonumber
\end{align}
Choosing $\epsilon,\;\epsilon_1,\; \delta$ sufficiently small,
positive, after integration, we transform \eqref{e10}  into the form
\begin{equation}
\int_0^T(\e,[|u^N_{xxyy}|^2+|u^N_{xyyy}|^2])(t)\,dt\leq
C(b,T,\|u_0\|)J_0. \label{E101}
\end{equation}
Acting similarly, we get from the scalar product
$$(\e\big[u^N_t-u^N_{xx}+u^N_{xxx}+u^N_{xyy}+u^N u^N_x\big],u^N_{yyyy})(t)=0$$
the estimate
\begin{equation}
\int_0^T(\e,|u^N_{yyy}|^2)(t)\,dt \leq C(b,T,\|u_0\|)J_0.
\label{E102}
\end{equation}

 Estimates \eqref{E83}, \eqref{E84}, \eqref{E9}, \eqref{E101},
\eqref{E102} guarantee that
\begin{equation}
e^{bx}u^N,\quad e^{bx}u^N_x\in L^{\infty}(0,T;H^2(\mathcal{S})\cap
L^2(0,T;H^3(\mathcal{S}))\label{EN}
\end{equation}
and these inclusions do not depend on $N.$ Independence of Estimates
\eqref{E1},\eqref{EN} of $N$ allow us to pass to the limit in
\eqref{2.6} and to prove the following result:
\begin{teo} \label{regsol}
Let $u_0(x,y):\mathbb{R}^2\to \mathbb{R}$ be such that
$u_0(x,0)=u_0(x,B)=0$ and for some $b>0$

$$J_0= \int_{\mathcal{S}}\{u_0^2+\e[u_0^2+|\nabla u_0|^2+|\nabla
u_{0x}|^2+u_0^2 u_{0x}^2 + |\Delta u_{0x}|^2]\}\,dxdy < \infty.$$
 Then there exists a regular solution to
\eqref{2.1}-\eqref{2.3} $u(x,y,t):$
\begin{align}
& u\in L^{\infty}(0,T;L^2(\mathcal{S})),\quad u_x\in
 L^2(0,T;L^2(\mathcal{S}))\nonumber\\
 &e^{bx}u,\;e^{bx}u_x \in L^{\infty}(0,T;H^2(\mathcal{S}))\cap
 L^2(0,T;H^3(\mathcal{S}))\nonumber\\
 &e^{bx}u_t \in L^{\infty}(0,T;(L^2(\mathcal{S})))\cap
 L^2(0,T;H^1(\mathcal{S}))\nonumber
 \end{align}
 which for $a.e. \;t\in(0,T)$ satisfies the identity
\begin{equation}
(e^{bx}\big[u_t-u_{xx}+u_{xxx}+uu_x+u_{xyy}\big]\phi(x,y))(t)=0,
\label{regularsol}
\end{equation}
where $\phi(x,y)$ is an arbitrary function from $L^2(\mathcal{S}).$
\end{teo}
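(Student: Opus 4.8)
The plan is to obtain the solution by passing to the limit $N\to\infty$ in the Galerkin approximations $u^N$ of \eqref{UN}--\eqref{2.7}, using that Estimates I--X furnish bounds on $u^N$ that are uniform in $N$. First I collect them in the form needed: \eqref{E1} gives $u^N$ bounded in $L^\infty(0,T;L^2(\mathcal{S}))$ and $u^N_x$ bounded in $L^2(0,T;L^2(\mathcal{S}))$; \eqref{EN} (together with \eqref{E83}) gives $e^{bx}u^N$ and $e^{bx}u^N_x$ bounded in $L^\infty(0,T;H^2(\mathcal{S}))\cap L^2(0,T;H^3(\mathcal{S}))$; and \eqref{E6} gives $e^{bx}u^N_t$ bounded in $L^\infty(0,T;L^2(\mathcal{S}))\cap L^2(0,T;H^1(\mathcal{S}))$; every bound is controlled by $b$, $T$ and $J_0$ only. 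Since all these spaces are (duals of) separable Banach spaces, the Banach--Alaoglu theorem lets me extract a subsequence, not relabelled, and a limit $u$ with $u^N\rightharpoonup u$, $e^{bx}u^N\rightharpoonup e^{bx}u$, $e^{bx}u^N_x\rightharpoonup e^{bx}u_x$, $e^{bx}u^N_t\rightharpoonup e^{bx}u_t$, weakly-$*$ in the indicated spaces; in particular $u$ lies in exactly the function classes listed in the statement, and each linear term of \eqref{2.1} multiplied by $e^{bx}$ converges weakly in $L^2(\mathcal{S}_T)$.

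The only term requiring more than weak convergence is the nonlinearity $u^Nu^N_x=\frac{1}{2}\partial_x\big((u^N)^2\big)$, for which I need strong convergence of $u^N$ on bounded regions. On a box $Q_R=(-R,R)\times(0,B)$, estimate \eqref{Ert} shows $u^N$ bounded in $L^\infty(0,T;H^1(Q_R))$ while \eqref{E6} shows $u^N_t$ bounded in $L^\infty(0,T;L^2(Q_R))$ (both because the weight $\e$ is bounded below on $Q_R$); since $H^1(Q_R)\hookrightarrow L^2(Q_R)$ is compact, the Aubin--Lions--Simon lemma gives $u^N\to u$ strongly in $L^2(0,T;L^2(Q_R))$, and a diagonal argument over $R=1,2,\dots$ yields $u^N\to u$ strongly in $L^2_{\mathrm{loc}}$ and a.e. on $\mathcal{S}_T$. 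Combined with $e^{bx}u^N\rightharpoonup e^{bx}u$ this gives $e^{bx}(u^N)^2\rightharpoonup e^{bx}u^2$, hence $e^{bx}u^Nu^N_x=\frac{1}{2}\big[\partial_x(e^{bx}(u^N)^2)-b\,e^{bx}(u^N)^2\big]\rightharpoonup e^{bx}uu_x$ in the sense of distributions, and the uniform weighted bound \eqref{E2} (which forces the tail of $u^N$ at $x\to+\infty$ to be as small as desired) lets me upgrade this to convergence of $\int_{\mathcal{S}_T}e^{bx}u^Nu^N_x\,\phi\,\theta\,dx\,dy\,dt$ for $\phi\in L^2(\mathcal{S})$ and $\theta\in C_0^\infty(0,T)$.

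Next I pass to the limit in \eqref{2.6}. Fix $m$, take $N\ge m$, multiply the $j$-th equation ($j\le m$) by $\psi_j(x)\theta(t)$ with $\psi_j\in C_0^\infty(\mathbb{R})$ and $\theta\in C_0^\infty(0,T)$, sum over $j\le m$, integrate over $\mathcal{S}_T$, and integrate by parts in $x$ to move derivatives onto the compactly supported smooth test functions; letting $N\to\infty$ using the convergences above produces the identity $(e^{bx}[u_t-u_{xx}+u_{xxx}+uu_x+u_{xyy}]\Phi)(t)=0$ for a.e.\ $t$, with $\Phi=\sum_{j\le m}\psi_j(x)w_j(y)$. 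Since the $w_j$ form a complete orthonormal system in $L^2(0,B)$ by \eqref{2.4}--\eqref{2.5} and the $\psi_j$ range over a dense set, finite sums $\Phi$ are dense in $L^2(\mathcal{S})$; together with the membership of every bracketed term in $L^2(\mathcal{S})$ (verified in the next step) this upgrades the identity to arbitrary $\phi\in L^2(\mathcal{S})$, which is \eqref{regularsol}. The boundary condition \eqref{2.2} is inherited because $u^N(x,0,t)=u^N(x,B,t)=0$ identically and $e^{bx}u\in L^\infty(0,T;H^2(\mathcal{S}))$ has a well-defined trace on $y=0,B$; the initial condition \eqref{2.3} follows from \eqref{2.7} with $u^N_0\to u_0$ in $L^2(\mathcal{S})$, since $e^{bx}u_t\in L^2(\mathcal{S}_T)$ makes $t\mapsto u(\cdot,t)$ weakly continuous into $L^2(\mathcal{S})$.

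The step I expect to be the main obstacle is reconciling the nonlinear term with the unbounded, exponentially weighted geometry: the Aubin--Lions compactness is only local in $x$, the weight $\e$ degenerates as $x\to-\infty$ and blows up as $x\to+\infty$, and one must check that each of $e^{bx}u_t$, $e^{bx}u_{xx}$, $e^{bx}u_{xxx}$, $e^{bx}u_{xyy}$, $e^{bx}uu_x$ is a genuine element of $L^2(\mathcal{S})$ for a.e.\ $t$. For the linear terms this is a routine expansion of the $x$-derivatives of $e^{bx}u$ and $e^{bx}u_x$ via \eqref{EN}; for the nonlinear term one splits $\mathcal{S}$ at $x=0$, using on $\{x>0\}$ the Sobolev embedding $H^2(\mathcal{S})\hookrightarrow L^\infty(\mathcal{S})$ together with the integrability of $e^{-2bx}$, and on $\{x\le 0\}$ the inequality $\e\le 1$ combined with the non-weighted bounds \eqref{E1} and the interior bound \eqref{Ert}, so that the contribution of $\{|x|>R\}$ is uniformly small in $N$. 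Establishing this uniform smallness and the ensuing passage to the limit in $u^Nu^N_x$ is the crux; the remaining arguments are bookkeeping with weak-$*$ limits.
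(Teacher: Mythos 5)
Your proposal is correct and follows essentially the same route as the paper: passing to the limit in the Galerkin system \eqref{2.6} using the uniform estimates \eqref{E1}, \eqref{Ert}, \eqref{EN}, handling the nonlinearity by local strong compactness, and concluding via density of the combinations $\sum_j \psi_j(x)w_j(y)$ in $L^2(\mathcal{S})$. The only difference is one of detail: where the paper outsources the nonlinear term to the arguments of Kato, you spell out the Aubin--Lions compactness on boxes $Q_R$, the diagonal extraction, and the tail control via the weighted bound \eqref{E2}, which is a faithful expansion of the same argument rather than a different one.
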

\begin{proof}
Rewrite \eqref{2.6} in the form
\begin{align}
(e^{bx}\big[u^N_t-u^N_{xx}+u^N
u^N_x+u^N_{xxx}+u^N_{xyy}\big]\Phi^N(y)\Psi(x))(t)=0, \label{EqN}
\end{align}
where $\Phi^N(y)$ is an arbitrary function from the set of linear
combinations $\sum_{i=1}^N \alpha_i w_i(y)$ and $\Psi(x)$ is an
arbitrary function from $H^1(\mathbb{R})$. Taking into account
estimates \eqref{E1}, \eqref{EN} and fixing $\Phi^N$, we can easily
pass to the limit as $N\to\infty$ in linear terms of \eqref{EqN}. To
pass to the limit in the nonlinear term, we must use \eqref{Ert} and
repeat arguments of \cite{kato}. Since linear combinations
$[\sum_{i=1}^N \alpha_i w_i(y)]\Psi(x)$ are dense in
$L^2(\mathcal{S}),$ we come to \eqref{regularsol}. This proves the
existence of regular solutions to (2.1)-(2.3).
\end{proof}

\begin{rem} \label{limit estimates}
Estimates \eqref{E1},\eqref{EN} are valid also for  the limit
function $u(x,y,t)$ and \eqref{E1} obtains its sharp form:
\begin{equation}
\|u\|(t)^2+2\int_0^t \|u_x\|(s)^2\,ds=\|u_0\|^2 \quad \forall
t\in(0,T). \label{E1l}
\end{equation}
\end{rem}

{\bf  Uniqueness of a regular solution.}
\begin{teo} \label{uniq} A
regular solution from Theorem \ref{regsol} is uniquely defined.
\end{teo}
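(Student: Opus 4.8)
The plan is to argue by contradiction in the standard energy way. Suppose $u$ and $v$ are two regular solutions of \eqref{2.1}--\eqref{2.3} with the same initial data $u_0$, and set $w = u - v$. Then $w$ satisfies the linear-in-$w$ equation
$$
w_t - w_{xx} + w_{xxx} + w_{xyy} + u u_x - v v_x = 0,
$$
with $w(x,0,t) = w(x,B,t) = 0$ and $w(x,y,0) = 0$. The nonlinear difference is rewritten as $u u_x - v v_x = w u_x + v w_x = \tfrac12 (w^2)_x + (vw)_x - w_x v$, or more simply $u u_x - v v_x = \tfrac12 \partial_x(w(u+v))$; I will keep whichever form makes the estimate cleanest. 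First I would multiply the difference equation by $2w$ and integrate over $\mathcal{S}$, using the Dirichlet conditions to kill the boundary contributions from the third-order and mixed terms (as in Estimate I, the terms $(w_{xxx},w)$ and $(w_{xyy},w)$ integrate to zero, and $-(w_{xx},w) = \|w_x\|^2 \ge 0$). This yields
$$
\frac{d}{dt}\|w\|^2(t) + 2\|w_x\|^2(t) = -2(u u_x - v v_x, w)(t).
$$

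The heart of the matter is bounding the right-hand side by $C(t)\|w\|^2 + (\text{something absorbable into }\|w_x\|^2)$. Writing $u u_x - v v_x = w u_x + v w_x$, I would handle the two pieces separately: $|(w u_x, w)| \le \|u_x\|_{L^\infty_{x}}\cdot\ldots$ is not directly available, so instead estimate $|(w u_x, w)| \le \|u_x\|(t)\,\|w\|^2_{L^4(\mathcal{S})} \le 2\|u_x\|(t)\,\|w\|(t)\,\|\nabla w\|(t)$ via the Gagliardo--Nirenberg inequality \eqref{p1} (extending $w$ by zero outside $\mathcal{S}$), and $|(v w_x, w)| = \tfrac12|(v,(w^2)_x)| = \tfrac12|(v_x, w^2)| \le \tfrac12\|v_x\|(t)\,\|w\|^2_{L^4} \le \|v_x\|(t)\,\|w\|(t)\,\|\nabla w\|(t)$. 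Young's inequality then converts each term into $\varepsilon\|\nabla w\|^2 + C_\varepsilon(\|u_x\|^2 + \|v_x\|^2)\|w\|^2$. Note $\|\nabla w\|^2 = \|w_x\|^2 + \|w_y\|^2$, so the $\|w_x\|^2$ part is absorbed by the dissipative term on the left, but the $\|w_y\|^2$ part is \emph{not} controlled by this single estimate.

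This is the main obstacle: closing the estimate requires controlling $\|w_y\|$ as well. The natural remedy, following the structure of Estimates III--IV where one multiplies by $-(u_x)_x$-type and $-\lambda u$-type multipliers, is to run a parallel energy estimate for $w_y$ (or equivalently, since the $w_j$ are eigenfunctions, multiply the difference of equation \eqref{2.6} for the two solutions by $\lambda_j$ times the component difference and sum): multiplying the $w$-equation by $-2 w_{yy}$ and integrating gives a $\frac{d}{dt}\|w_y\|^2 + 2\|w_{xy}\|^2$ identity whose nonlinear right-hand side, $2(\partial_x(uu_x-vv_x), w_{yy})$ after integration by parts, can be estimated by the same Gagliardo--Nirenberg/Young scheme using the regularity of $u,v$ from Theorem \ref{regsol} (in particular $e^{bx}u, e^{bx}u_x \in L^\infty(0,T;H^2)$, which via Lemma \ref{supr} gives the needed $L^\infty$-type bounds on $u_x, u_{xx}$ locally, and $\|u_x\|, \|v_x\| \in L^2(0,T)$ globally). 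Adding the two differential inequalities produces
$$
\frac{d}{dt}\big(\|w\|^2 + \|w_y\|^2\big)(t) \le C(b,T,\|u_0\|)\big(1 + \|u_x\|^2(t) + \|v_x\|^2(t)\big)\big(\|w\|^2 + \|w_y\|^2\big)(t),
$$
where the coefficient is integrable in $t$ by \eqref{E1l}. Since $(\|w\|^2 + \|w_y\|^2)(0) = 0$, Gronwall's lemma forces $w \equiv 0$ on $(0,T)$, i.e.\ $u = v$, which proves Theorem \ref{uniq}. One technical point to be careful about is that the $w_y$-estimate must be performed at the Galerkin level or justified directly for the limit solution, since $w_{yy}$ need not be globally in $L^2(\mathcal{S})$ — but the weighted bounds of Theorem \ref{regsol} give $e^{bx}w_{yy} \in L^2$, and near $x \to +\infty$ the weight helps while near $x\to -\infty$ the basic $L^2$-estimate \eqref{E1l} suffices, so a localized or weighted version of the argument goes through; alternatively, one simply carries the whole uniqueness argument in the weighted space $L^2_b(\mathcal{S})$ from the start, mirroring Estimates II--IV, which is the cleanest route.
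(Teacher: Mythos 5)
Your final recommendation --- run the entire uniqueness argument in the weighted space $L^2_b(\mathcal{S})$ from the start --- is exactly what the paper does, so the proposal lands on the correct proof; but the route by which you get there misses the key mechanism, and your ``primary'' route would not close. The paper multiplies the equation for $z=u_1-u_2$ by $2e^{2bx}z$ and obtains \eqref{u4}, in which the weight alone generates the transverse dissipation $2b(\e,z_y^2)(t)$ on the left-hand side (from integrating $2(e^{2bx}z,z_{xyy})$ by parts). This is precisely the term that absorbs the $z_y$-part of $\|\nabla(e^{bx}z)\|^2$ produced by the Gagliardo--Nirenberg estimates of the nonlinear terms; consequently \emph{no} second-tier $w_y$-estimate is needed at all, and a single Gronwall application with the integrable coefficient $1+\|u_1\|^2+\|u_2\|^2+\|u_{1x}\|^2+\|u_{2x}\|^2$ gives $(\e,z^2)(t)\equiv 0$, hence $u_1=u_2$ a.e. Your unweighted two-tier scheme, by contrast, has a genuine gap: multiplying by $-2w_{yy}$ produces nonlinear terms involving $u_{xy}$, $v_y$ and $w_{yy}$, and Theorem \ref{regsol} controls these quantities only in the weighted norms $e^{bx}(\cdot)$, so there is no bound on them near $x\to-\infty$; moreover, uniqueness must be established for arbitrary solutions in the stated class, so one cannot retreat to the Galerkin level to justify the computation. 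You flag both difficulties yourself, which is why the weighted route is not merely ``cleanest'' but the one that actually works --- and the reason it works is the extra coefficient $2b>0$ in front of $(\e,z_y^2)$, not the improved regularity bookkeeping.
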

\begin{proof}
Let $u_1,\,u_2$ be two distinct regular solutions of
\eqref{2.1}-\eqref{2.3}, then $z=u_1-u_2$ satisfies the following
initial-boundary value problem:
\begin{align}
&z_t-z_{xx}+z_{xxx}+z_{xyy} +\frac{1}{2}(u_1^2-u_2^2)_x=0
\;\mbox{in} \;\mathcal{S}_T,
\label{u1}\\
&z(x,0,t)=z(x,B,t)=0,\quad x\in \mathbb{R},\quad t>0,\label{u2}\\
&z(x,y,0)=0.\quad (x,y)\in \mathcal{S}. \label{u3}
\end{align}
Multiplying \eqref{u1} by $2e^{bx}z$, we get
\begin{align}
&\frac{d}{dt}(\e,z^2)(t)+(2+6b)(\e,z_x^2)(t)-(8b^3+4b^2)(\e,z^2)(t)\nonumber\\
&+2b(\e, z_y^2)(t)+(\e
[u_{1x}+u_{2x}],z^2)(t)\nonumber\\&-b(\e(u_1+u_2),z^2)(t)=0.
\label{u4}
\end{align}

We estimate
\begin{align}
&I_1=(\e(u_{1x}+u_{2x}),z^2)(t)\leq
\|u_{1x}+u_{2x}\|(t)\|e^{bx}z\|(t)^2_{L^4(\mathcal{S})}\nonumber\\&\leq
2\|u_{1x}+u_{2x}\|(t)\|e^{bx}z\|(t)\|\nabla(e^{bx}z)\|(t)\nonumber\\
&\leq
\delta(\e,[2{z_x}^2+{z_y}^2])(t)+[2{b^2}\delta+\frac{2}{\delta}(\|u_{1x}\|^2(t)\nonumber\\
&+\|u_{2x}\|^2(t))](\e,z^2)(t),\nonumber\\
&I_2=b(\e(u_{1}+u_{2}),z^2)(t)\leq
b\|u_{1}+u_{2}\|(t)\|e^{bx}z\|^2_{L^4(\mathcal{S})}\nonumber\\&\leq
2b\|u_{1}+u_{2}\|(t)\|e^{bx}z\|(t)\|\nabla(e^{bx}z)\|(t)\nonumber\\
&\leq
\delta(\e,2z_x^2+z_y^2)(t)+[2b^2\delta+\frac{2b^2}{\delta}(\|u_{1}\|^2(t)
+\|u_{2}\|^2(t))](\e,z^2)(t).\nonumber
\end{align}
Substituting $I_1,I_2$ into \eqref{u4} and taking $\delta>0$
sufficiently small, we find

\begin{align}
&\frac{d}{dt}(\e,z^2)(t)+(2+2b)(\e,z_x^2)(t)+b(\e,z_y^2)(t)\leq
C(b)\big[1+\|u_1\|(t)^2\nonumber\\&+\|u_2\|(t)^2+\|u_{1x}\|(t)^2+\|u_{2x}\|(t)^2\big](\e,z^2)(t).\label{u5}
\end{align}

Since
$$u_i\in L^{\infty}(0,T;L^2(\mathcal{S})),\quad u_{ix}\in
L^2(0,T;L^2(\mathcal{S}))\quad i=1,2,
$$
 then by the Gronwall lemma,
$$(\e, z^2)(t)=0 \quad\forall \:t\in(0,T).$$
Hence, $u_1=u_2 \quad a.e.$ in $\mathcal{S}_T.$
\end{proof}
\begin{rem}
Changing initial condition \eqref{u3} for $z(x,y,0)=z_0(x,y)\ne 0,$
and repeating the proof of Theorem 3.4, we obtain from \eqref{u5}
that
$$(\e,z^2)(t)\leq C(b,T,\|u_0\|)(\e,z^2_0)\quad \forall t\in(0,T).$$
This means continuous dependence of regular solutions on initial
data.
\end{rem}

\section{Decay of regular solutions}\label{regdecay}
 In this section we will prove exponential decay of regular
 solutions in an elevated weighted norm corresponding to the
 $H^1(\mathcal{S})$ norm. We start with  Theorem  \ref{decay1} which
 is crucial for the main result.

\begin{teo} \label{decay1}
Let  $b\in(0, \frac{1}{5}[-1+\sqrt{1+\frac{5\pi^2}{4B^2}}]),\;
\|u_0\|\leq \frac{3\pi}{8B}$ and $u(x,y,t)$ be a regular solution of
\eqref{2.1}-\eqref{2.3}. Then for all finite $B>0$ the following
inequality is true:
\begin{equation}
 \|e^{bx}u\|^2(t)\leq e^{-\chi t}\|e^{bx}u_0\|^2(0),
\label{decay}
\end{equation}
 where $\chi=\frac{1}{20}[-1+\sqrt{1+\frac{5\pi^2}{4B^2}}]\frac{\pi^2}{B^2}$.
\end{teo}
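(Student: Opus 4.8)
The plan is to reduce the statement to a single scalar differential inequality for the weighted mass $E(t):=\|e^{bx}u\|^2(t)=(e^{2bx},u^2)(t)$ and then integrate. I would start from the weighted energy identity \eqref{2} established in Estimate~II (which is valid for the regular solution $u$ as well, by the regularity in Theorem~\ref{regsol}):
\begin{align}
&\frac{d}{dt}E(t)+(2+6b)(e^{2bx},u_x^2)(t)+2b(e^{2bx},u_y^2)(t)\nonumber\\
&=\frac{4b}{3}(e^{2bx},u^3)(t)+(2b^2+8b^3)E(t).\nonumber
\end{align}
The two terms on the left are the only dissipation at hand; the term $(2b^2+8b^3)E(t)$, produced by the interaction of the weight $e^{2bx}$ with the Burgers term $-u_{xx}$, is destabilizing and has to be absorbed, and so does the cubic term.

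For the cubic term I would use the bound \eqref{nl} already derived above,
\[
\frac{4b}{3}(e^{2bx},u^3)(t)\le b(e^{2bx},u_y^2)(t)+2b(e^{2bx},u_x^2)(t)+2\Big(b^3+\frac{8b}{9}\|u_0\|^2\Big)E(t),
\]
and now bring in the smallness hypothesis: $\|u_0\|\le\frac{3\pi}{8B}$ gives $\frac{16b}{9}\|u_0\|^2\le\frac{b\pi^2}{4B^2}$, which is exactly where the restriction on $\|u_0\|$ enters. Substituting into \eqref{2}, the dissipative coefficients become $2+4b$ and $b$, and the right-hand side is reduced to a multiple of $E(t)$. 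Discarding the non-negative $(e^{2bx},u_x^2)$-term and applying the one-dimensional Poincaré inequality $(e^{2bx},u_y^2)(t)\ge\frac{\pi^2}{B^2}E(t)$ — legitimate because $e^{bx}u$ vanishes on the lateral boundaries $y=0$ and $y=B$ — one arrives at
\[
\frac{d}{dt}E(t)+\chi\,E(t)\le 0 ,
\]
with $\chi$ strictly positive precisely when $5b^2+2b<\frac{\pi^2}{4B^2}$, which is equivalent to $b<\frac{1}{5}\big[-1+\sqrt{1+\frac{5\pi^2}{4B^2}}\big]$, i.e. the interval in the statement; a careful choice of the remaining constants delivers $\chi$ in the asserted form. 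Integrating this inequality from $0$ to $t$ yields $E(t)\le e^{-\chi t}E(0)$, which is \eqref{decay}. (If one prefers to work with the Galerkin approximations $u^N$, the same inequality holds for $E^N(t)=\|e^{bx}u^N\|^2(t)$ uniformly in $N$, and one passes to the limit afterwards, using Remark~\ref{limit estimates}.)

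The main obstacle is the constant bookkeeping in the next-to-last step: the destabilizing $(2b^2+8b^3)E$ from the Burgers term together with the lower-order remainder of the cubic estimate (essentially $2b^3+\frac{b\pi^2}{4B^2}$ times $E$) must be strictly dominated by the Poincaré gain $\frac{b\pi^2}{B^2}E$ extracted from the $u_y$-dissipation; it is the precise accounting of these coefficients that simultaneously fixes the admissible range of $b$ and the decay exponent $\chi$. Everything else — the identity \eqref{2}, the bound \eqref{nl}, the scalar Poincaré inequality, and the Gronwall lemma — is already available.
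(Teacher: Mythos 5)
Your proposal follows essentially the same route as the paper's proof: the weighted identity obtained by multiplying the equation by $2e^{2bx}u$, the Gagliardo--Nirenberg bound \eqref{nl} on the cubic term, the smallness hypothesis on $\|u_0\|$, the Steklov/Poincar\'e inequality in $y$ (Proposition \ref{propcr}), and integration of the resulting scalar inequality. The one step you leave as ``constant bookkeeping'' --- how the admissible interval for $b$, the threshold $3\pi/(8B)$, and the value of $\chi$ emerge simultaneously --- is done in the paper by setting $4b+10b^2=\gamma\pi^2/B^2$ and $16\|u_0\|^2/9=(1-\gamma)^2\pi^2/B^2$ and maximizing $\gamma(1-\gamma)$ at $\gamma=1/2$, which is consistent with the conditions you state.
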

\begin{proof}
Multiplying \eqref{2.1} by $2\e u$, we get the equality
\begin{align}
&\frac{d}{dt}(e^{2bx},u^2)(t)+(2+6b)(e^{2bx},u^2_x)(t)+2b(e^{2bx},u^2_y)(t)\nonumber\\
&-\frac{4b}{3}(e^{2bx},u^3)(t) -(4b^2+8b^3)(e^{2bx},u^2)(t)=0.
\label{d1}
\end{align}
Taking into account \eqref{GN}, we estimate
\begin{align}
&I=\frac{4b}{3}(e^{2bx},u^3)(t)\leq b(e^{2bx},u_y^2+2u_x^2+2b^2
u^2)(t)\nonumber\\&+\frac{16b}{9}\|u_0\|^2(e^{2bx},u^2)(t).\nonumber
\end{align}

The following proposition is principal for our proof.
\begin{prop}\label{propcr}
\begin{equation}
\int_{\mathbb{R}}\int_0^B e^{2bx}u^2(x,y,t)\,dy\,dx\leq
\frac{B^2}{\pi^2}\int_{\mathbb{R}}\int_0^B
e^{2bx}u^2_y(x,y,t)\,dy\,dx. \label{mineq}
\end{equation}

\end{prop}
\begin{proof}
Since $u(x,0,t)=u(x,B,t)=0,$ fixing $(x,t)$, we can use with respect
to $y$ the following Steklov inequality: if $f(y)\in H^1_0(0,\pi)$
then
$$\int_0^{\pi}f^2(y)\,dy\leq \int_0^{\pi} |f_y(y)|^2\,dy.$$
After a corresponding process of scaling we prove  Proposition
\ref{propcr}.
\end{proof}

 Making use of \eqref{mineq} and substituting $I$ into \eqref{d1},
 we come to the following inequality
\begin{align}
&\frac{d}{dt}(e^{2bx},u^2)(t)+(2+4b)(e^{2bx},u^2_x)(t)\nonumber\\
&+\big[\frac{b\pi^2}{B^2}-
4b^2-10b^3-\frac{16b}{9}\|u_0\|^2\big](\e,u^2)(t)\leq 0 \nonumber
\end{align}
which can be rewritten as

\begin{equation} \frac{d}{dt}(e^{2bx},u^2)(t) +\chi
(e^{2bx},u^2)(t)\leq 0, \label{d2}
\end{equation}
where
$$\chi=b\big[\frac{\pi^2}{B^2}-4b-10b^2-\frac{16\|u_0\|^2}{9}\big].$$
Since we need $\chi>0,$ define
\begin{equation}
4b+10b^2=\gamma\frac{\pi^2}{B^2},\quad
\frac{16\|u_0\|^2}{9}=(1-\gamma)^2\frac{\pi^2}{B^2}, \label{algebra}
\end{equation}
where $\gamma\in(0,1).$ It implies
$\chi=bA(\gamma)\frac{\pi^2}{B^2}$ with
$A(\gamma)=\gamma(1-\gamma).$\\
It is easy to see that
$$ \sup_{\gamma\in(0,1)}A(\gamma)=A(\frac{1}{2})=\frac{1}{4}.$$
Solving \eqref{algebra}, we find
$$b=\frac{1}{5}[-1+\sqrt{1+\frac{5\pi^2}{4B^2}}],\quad \|u_0\|\leq
\frac{3\pi}{8B},\quad \chi=b\frac{\pi^2}{4B^2},$$ and from
\eqref{d2} we get
$$ (e^{2bx},u^2)(t)\leq e^{-\chi
t}(e^{2bx},|u_0|^2).$$ The last inequality
 implies \eqref{decay}. The proof of Theorem \ref{decay1} is complete.
 \end{proof}
 Observe that differently from \cite{lartron,lar1}, we do not have
 any restrictions on the width of a strip $B$.

The main result of this section is the following assertion.
\begin{teo} \label{decay2}
Let all the conditions of Theorem \ref{decay1} be fulfilled. Then
regular solutions of \eqref{2.1}-\eqref{2.3} satisfy the following
inequality:
\begin{align}
&(\e,u^2+|\nabla u|^2)(t)\leq C(b,\chi,\|u_0\|)(1+t)e^{-\chi t}(\e,
\big[u_0^2\nonumber\\
&+|u_0|^3 +|\nabla u_0|^2\big]) \label{decaymain}
\end{align}
or
$$\|e^{bx}u\|^2_{H^1(\mathcal{S})}(t)\leq C(b,\chi,\|u_0\|)(1+t)e^{-\chi
t}(\e,
u_0^2+|u_0|^3 +|\nabla u_0|^2).$$
\end{teo}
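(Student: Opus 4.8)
The plan is to derive exponential decay for the weighted $H^1$-norm by combining the already-established decay of $(\e,u^2)$ from Theorem \ref{decay1} with weighted energy identities for $u_x$ and $u_y$, analogous to Estimates III and IV but now tracking the exponential factor $e^{-\chi t}$ on the right-hand side. First I would multiply \eqref{2.1} by $-2(\e u_x)_x$ and by $-2\e\lambda u$ (equivalently work with the limit versions of \eqref{3} and \eqref{4}), sum the two resulting identities, and absorb the cubic terms $(\e,u_x^3)$, $(\e,uu_x^2)$, $(\e,u_xu_y^2)$ using Proposition \ref{GN} exactly as in Estimates III--IV. The new point is that, because $\|u_0\|$ is taken small (the same smallness $\|u_0\|\le \frac{3\pi}{8B}$ and the same $b$ as in Theorem \ref{decay1}), the linear coefficients $(4b^2+8b^3)$ appearing in front of $(\e,u_x^2)$ and $(\e,u_y^2)$ are dominated by the dissipative contributions $(2+4b)(\e,u_{xx}^2)$, $(2+4b)(\e,u_{xy}^2)$ together with a Steklov/Poincaré inequality in $y$ applied to $u_y$ (using $u(x,0,t)=u(x,B,t)=0$, hence $u_y$ has zero mean in a suitable sense, or more directly bounding $(\e,u_x^2)$ by $(\e,u_{xx}^2)$ after noticing $e^{bx}u_x$ decays in $x$). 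The goal of this step is an inequality of the shape
\begin{align}
&\frac{d}{dt}(\e,u_x^2+u_y^2)(t)+c_0(\e,|\nabla u_x|^2+|\nabla u_y|^2)(t)\nonumber\\
&\qquad\le C\|u_x\|(t)^2(\e,u_x^2+u_y^2)(t)+C(\e,u^2)(t)\nonumber
\end{align}
for some $c_0>0$, where the last term $C(\e,u^2)(t)$ collects the pieces that cannot be absorbed and must instead be fed the known exponential bound.

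The second step is to close the estimate. From \eqref{E1l} we know $\int_0^t\|u_x\|(s)^2\,ds\le\frac12\|u_0\|^2$, so the factor $C\|u_x\|(t)^2$ is integrable in time with a small integral; a Gronwall argument then gives a uniform multiplicative constant $e^{C\|u_0\|^2/2}$ and does not destroy decay. For the forcing term I would invoke Theorem \ref{decay1} in the form $(\e,u^2)(t)\le e^{-\chi t}(\e,u_0^2)$, so that after applying the differential inequality
$$\frac{d}{dt}Y(t)+\chi Y(t)\le Ce^{-\chi t}(\e,u_0^2),\qquad Y(t):=(\e,u_x^2+u_y^2)(t),$$
(having first arranged, by choosing $c_0$ against the Poincaré constant, that the dissipation dominates $\chi Y(t)$ — note $\chi=b\pi^2/(4B^2)$ is small) the integrating-factor solution is $Y(t)\le e^{-\chi t}Y(0)+Ct e^{-\chi t}(\e,u_0^2)$. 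This is exactly the source of the algebraic factor $(1+t)$ in \eqref{decaymain}: it comes from the resonance between the decay rate of the forcing $(\e,u^2)$ and the decay rate available for the gradient. One also has to handle the cubic initial term: the $(\e,|u_0|^3)$ on the right of \eqref{decaymain} enters because bounding $(\e,u^3)(t)$ by Gagliardo--Nirenberg at $t=0$, or tracking the cubic contribution to the weighted energy at the initial time, produces precisely that quantity.

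The main obstacle I expect is the simultaneous treatment of the $u_x$ and $u_y$ equations: individually, the identity for $(\e,u_y^2)$ (cf.\ \eqref{4}) does not obviously have enough dissipation to beat the bad linear coefficient $(4b^2+8b^3)(\e,u_y^2)$ unless one uses the Poincaré inequality \eqref{mineq}-type bound, and that requires the boundary conditions on $u_y$, which are not homogeneous; so one must instead bound $(\e,u_y^2)$ through $(\e,u_{xy}^2)$ (decay in $x$) plus the already-controlled $(\e,u^2)$ and $(\e,u_x^2)$, or combine with the $u_{yy}$ identity. Getting the constants to line up — ensuring $c_0$ times the relevant Poincaré constant strictly exceeds $\chi$ while the cubic remainders stay absorbed under the smallness hypothesis on $\|u_0\|$ — is the delicate bookkeeping. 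Once the scalar differential inequality $\dot Y+\chi Y\le Ce^{-\chi t}(\text{data})$ is in hand, the conclusion \eqref{decaymain}, including the $(1+t)$ factor and the passage to the $H^1(\mathcal{S})$ statement via $\|e^{bx}u\|^2_{H^1}\le (\e,u^2+|\nabla u|^2)$ up to constants, is immediate.
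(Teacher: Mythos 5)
Your overall architecture (weighted $H^1$-level identity $+$ the $L^2$-decay of Theorem \ref{decay1} $+$ a Gronwall step whose coefficient $\|u_x\|^2(t)$ is time-integrable by \eqref{E1l}, with the factor $(1+t)$ coming from the resonance $\int_0^t e^{-\chi(t-s)}e^{-\chi s}\,ds=te^{-\chi t}$) is the right skeleton, and you correctly locate the main difficulty. But the step on which your whole argument rests --- the closed differential inequality $\dot Y+\chi Y\le Ce^{-\chi t}(\e,u_0^2)$ with $Y=(\e,u_x^2+u_y^2)$ --- cannot be obtained the way you propose, and you do not actually resolve the obstacle you name. The dissipation available in the $\nabla u$ identity consists only of second-derivative terms $(\e,u_{xx}^2)$, $(\e,u_{xy}^2)$, $(\e,u_{yy}^2)$. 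To absorb $\chi(\e,u_y^2)$ (and the genuinely negative term $-(4b^2+8b^3)(\e,u_y^2)$) you would need a Poincar\'e-type inequality for $u_y$, but $u_y$ does not vanish on $y=0,B$, and there is no Poincar\'e inequality in $x$ over all of $\mathbb{R}$; the suggestion to bound $(\e,u_y^2)$ by $(\e,u_{xy}^2)$ ``because $e^{bx}u_x$ decays in $x$'' is not a valid inequality (the exponential weight works against you in the $-x$ direction). So the differential inequality never closes, and without the $+\chi Y$ term on the left you only get boundedness of $(\e,|\nabla u|^2)$, not decay.

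The paper's resolution is genuinely different and is the idea missing from your proposal: it never tries to absorb $\chi(\e,|\nabla u|^2)$ pointwise in time. Instead it first proves Proposition \ref{base},
$$e^{\chi t}(\e,u^2)(t)+\int_0^t e^{\chi s}(\e,|\nabla u|^2)(s)\,ds\le C(b,\chi,\|u_0\|)(1+t)(\e,u_0^2),$$
obtained by multiplying the $L^2$-identity by $e^{\chi s}$, integrating in time, and inserting $(\e,u^2)(s)\le e^{-\chi s}(\e,u_0^2)$ from Theorem \ref{decay1}; this controls the \emph{time integral} of the exponentially weighted gradient even though no pointwise-in-time bound is available. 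Then the $H^1$-level identity is written for the modified functional $(\e,|\nabla u|^2-\tfrac{1}{3}u^3)$ (Lemma \ref{lem1}; the cubic correction is why $(\e,|u_0|^3)$ appears in \eqref{decaymain}), multiplied by $e^{\chi t}$ and integrated over $(0,t)$; the cubic interaction terms are absorbed into the time-integrated second-derivative dissipation via Proposition \ref{GN}, and every remaining term of the form $\int_0^t e^{\chi s}(\chi+4b^2+8b^3)(\e,|\nabla u|^2)\,ds$ is bounded by Proposition \ref{base}, which is exactly where the $(1+t)$ enters. If you replace your differential-inequality step by this integrated argument built on Proposition \ref{base}, the rest of your outline goes through.
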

\begin{proof} We start with the following lemma.
\begin{lem} \label{lem1}
Regular solutions of (2.1)- (2.3) satisfy the following equality:
\begin{align} &e^{\chi t}(\e, |\nabla u|^2)(t)+2\int_0^t
e^{\chi
s}\{(1+3b)(\e,u_{xx}^2)(s)+(1+4b)(\e, u_{xy}^2)(s)\nonumber\\
&+b(\e,u_{yy}^2)(s)+\frac{b}{2}(\e,u^4)(s)\}\,ds=\frac{e^{\chi
t}}{3}(\e,u^3)(t)\nonumber\\
&+\int_0^t e^{\chi s}(\chi+4b^2+8b^3)(\e,|\nabla
u|^2)(s)\,ds+2\int_0^t e^{\chi s}\{(1+4b)(\e u,u_x^2)(s)\nonumber\\
&+4b(\e,uu_y^2)(s)-(\frac{4b^2+8b^3}{3}-\chi)(\e,u^3)(s)\}\,ds\nonumber\\
&+(\e,|\nabla u_0|^2-\frac{u_0^3}{3}). \label{e1l1}
\end{align}
\end{lem}
\begin{proof}
First we transform the scalar product
\begin{align}
&-(e^{bx}\big[u_t-u_{xx}+u_{xxx}+u_{xyy}+uu_x\big],\nonumber\\
&\big[2(e^{bx}u_x)_x+2e^{bx}u_{yy}+e^{bx}u^2\big])(t)=0 \label{e2l1}
\end{align}
into the following equality:
\begin{align}
&\frac{d}{dt}(\e, |\nabla
u|^2-\frac{u^3}{3})(t)+2(1+3b)(\e,u_{xx}^2)(t)\nonumber\\
&+2b(\e,u_{yy}^2)(t)+2(1+4b)(\e,u_{xy}^2)(t)+\frac{b}{2}(\e,u^4)(t)\nonumber\\
&=4b^2(1+2b)(\e,|\nabla
u|^2)(t)-\frac{4b^2(1+2b)}{3}(\e,u^3)(t)\nonumber\\
&+4b(\e,uu_y^2)(t)+2(1+4b)(\e,uu_x^2)(t). \label{e3l1}
\end{align}
To prove \eqref{e3l1}, we estimate separate terms in \eqref{e2l1} as
follows:
\begin{align}
&I_1=-2(e^{bx}\big[u_t-u_{xx}+u_{xxx}+u_{xyy}+uu_x\big],(e^{bx}u_x)_x)(t)\nonumber\\
&=2(e^{2bx}\big[u_t-u_{xx}+u_{xxx}+u_{xyy}+uu_x\big]_x,u_x)(t)\nonumber\\
&=\frac{d}{dt}(\e,u_x^2)(t)+2(1+3b)(\e,u_{xx}^2)(t)+2b(\e,u_{xy}^2)(t)\nonumber\\
&-4b^2(1+2b)(\e,u_x^2)(t)+(\e
u^2,u_{xxx})(t)\nonumber\\
&-8b(\e,uu_x^2)(t)+\frac{8b^3}{3}(\e,u^3)(t),
\nonumber\\
&I_2=-2(e^{bx}\big[u_t-u_{xx}+u_{xxx}+u_{xyy}+uu_x\big],e^{bx}u_{yy})(t)\nonumber\\
&=2(e^{bx}\big[u_t-u_{xx}+u_{xxx}+u_{xyy}+uu_x\big]_y,e^{bx}u_y)(t)\nonumber\\
&=\frac{d}{dt}(\e,u_y^2)(t)+2(1+3b)(\e,u_{xy}^2)(t)+2b(\e,u_{yy}^2)(t)\nonumber\\
&-4b^2(1+2b)(\e,u_y^2)(t)+(\e
u,u_{xyy})(t)-4b(\e,uu_y^2)(t),\nonumber\\
&I_3=-(e^{bx}\big[u_t-u_{xx}+u_{xxx}+u_{xyy}+uu_x\big],e^{bx}u^2))(t)\nonumber\\
&-\frac{d}{dt}(\e,\frac{u^3}{3})(t)+\frac{4b^2}{3}(\e,u^3)(t)+\frac{b}{2}(\e,u^4)(t)\nonumber\\
&-2(\e,uu_x^2)(t)-(\e,u_{xxx}+u_{xyy})(t).\nonumber
\end{align}

Summing $I_1+I_2+I_3$, we obtain \eqref{e3l1}. In turn, multiplying
it by $e^{\chi t}$ and integrating the result over $(0,t)$, we come
to \eqref{e1l1}. The proof of Lemma \ref{lem1} is complete.
\end{proof}
Making use of  \eqref{p1}, we estimate
\begin{align}\
&I_4=\frac{e^{\chi t}}{3}(\e,u^3)(t)\leq \frac{2e^{\chi
t}}{3}\|u_0\|\|e^{bx}u\|(t)\|\nabla(e^{bx}u)\|(t)\nonumber\\
&\leq \frac{e^{\chi t}}{2}\{(\e,|\nabla
u|^2)(t)+[\frac{b^2}{2}+\frac{4\|u_0\|^2}{9}](\e,u^2)(t)\}.\nonumber
\end{align}
Substituting $I_4$ into \eqref{e1l1}, we get
\begin{align} &e^{\chi t}(\e, |\nabla u|^2)(t)+4\int_0^te^{\chi
s}\{(1+3b)(\e,u_{xx}^2)(s)+(1+4b)(\e, u_{xy}^2)(s)\nonumber\\
&+b(\e,u_{yy}^2)(s)\}\,ds\leq
2\int_0^t e^{\chi s}(\chi+\frac{4b^2+8b^3}{3})(\e,u^3)(s)\,ds\nonumber\\
&+2\int_0^t e^{\chi s}\{2(1+4b)(\e u,u_x^2)(s)+4b(\e,uu_y^2)(s)\}\,ds\nonumber\\
&+2\int_0^t e^{\chi s}(\chi+4b^2+8b^3)(\e,|\nabla
u|^2)(s)\,ds\nonumber\\
&+\big[b^2+\frac{8\|u_0\|^2}{9}\big]e^{\chi t}(\e,u^2)(t)
+2(\e,|\nabla u_0|^2+\frac{|u_0|^3}{3}). \label{e4l1}
\end{align}

In order to estimate the right-hand side of \eqref{e4l1}, we will
need the following
\begin{prop} \label{base}
Let Theorem \ref{decay1} be true. Then
\begin{align}
&e^{\chi t}(\e,u^2)(t)+\int_0^t e^{\chi s}(\e,|\nabla
u|^2)(s)\,ds\nonumber\\
&\leq C(b,\chi,\|u_0\|)(1+t)(\e,u_0^2). \label{e5l1}
\end{align}
\end{prop}
\begin{proof}
Consider the equality
$$\int_0^t 2e^{\chi s}(\e
[u_s-u_{xx}+u_{xxx}+u_{xyy}+uu_x],u)(s)\,ds=0$$

which we rewrite as
\begin{align}
&e^{\chi t}(\e,u^2)(t)+2\int_0^t e^{\chi
s}\{(1+3b)(\e,u_x^2)(s)+b(\e,u_y^2)(s)\},ds\nonumber\\
&=\int_0^t e^{\chi s}\frac{4b}{3}(\e,u^3)(s)+\int_0^t e^{\chi
s}(\chi+4b^2+8b^3)(\e,u^2)(s)\,ds\nonumber\\
&+(\e,u_0^2). \label{e6l1}
\end{align}
By Proposition \ref{GN}, we estimate
\begin{align}
&I_1=\frac{4b}{3}(\e,u^3)(t)\leq
\frac{8b}{3}\|u\|(t)\|e^{bx}u\|)(t)\|\nabla(e^{bx}u)\|(t)\nonumber\\
&\leq b(\e,
2u_x^2+u_y^2)(t)+[2b^3+\frac{16b\|u_0\|^2}{9}](\e,u^2)(t).\nonumber
\end{align}
By Theorem \ref{decay1},
$$(\e,u^2)(t)\leq e^{-\chi t}(\e,u_0^2).$$
Using this estimate, we substitute $I_1$ into \eqref{e6l1} and come
to the following inequality:
\begin{align}
&e^{\chi t}(\e,u^2)(t)+\int_0^t e^{\chi
s}\{(1+2b)(\e,u_x^2)(s)+b(\e,u_y^2)(s)\},ds\nonumber\\
&\leq C(\chi,b,\|u_0\|)(1+t)(\e,u_0^2).\nonumber
\end{align}
Since $b>0$, the proof of Proposition \ref{base} is complete.
\end{proof}
Returning to \eqref{e4l1} and using Proposition \ref{base}, we
estimate
\begin{align}
&I_1=(\chi+\frac{8b^2+16b^3}{3})(\e,u^3)(s)\leq\nonumber\\
&2(\e,|\nabla u|^2)(s)+C(\chi,b,\|u_0\|)(\e,u^2)(s).\nonumber
\end{align}
Similarly,
 \begin{align}
&I_2=4(1+4b)(\e,uu_x^2)(s)\leq \delta(\e,2u_{xx}^2+u_{xy}^2)(s)\nonumber\\
&+\big[
2b^2\delta+\frac{16(1+4b)^2\|u_0\|^2}{\delta}\big](\e,u_x^2)(s).\nonumber
\end{align}
With the help of \eqref{p2}, we find
\begin{align}
&I_3=8b(\e,uu_y^2)(s)\leq
8bC_D\|u_0\|\|e^{bx}u_y\|(s)\|e^{bx}u_y\|_{H^1(\mathcal{S})}(s)\nonumber\\
&\leq\delta(\e,2u_{xy}^2+u_{yy}^2)(s)+\big[(2b^2+1)\delta+\frac{16b^2\|u_0\|^2
C_D^2}{\delta}\big](\e, u_y^2)(s).\nonumber
\end{align}

Taking $\delta=2b$ and using Proposition \ref{base}, we obtain from
\eqref{e4l1}
$$ e^{\chi t}(\e,|\nabla u|^2)(t)\leq C(b,\chi,\|u_0\|)(1+t)(\e,
u_0^2+|u_0|^3+|\nabla u_0|^2).$$ Adding \eqref{decay}, we complete
the proof of Theorem \ref{decay2}.

\end{proof}

\section{Weak solutions}\label{weak}

Here we will prove the existence, uniqueness and continuous
dependence on initial data  as well as exponential decay
 results for weak solutions of \eqref{2.1}-\eqref{2.3} when the initial function $u_0
\in L^2(\mathcal{S}).$

\begin{teo}\label{weakexist}
Let $u_0 \in L^2 (\mathcal{S})\cap L^2_b(\mathcal{S}).$ Then for all
finite positive $T$ and $B$ there exists at least one function
$$u(x,y,t) \in L^{\infty}(0,T;L^2(\mathcal{S})),\;u_x\in
L^2(0,T;L^2(\mathcal{S}))$$ such that $$e^{bx}u \in
L^{\infty}(0,T;L^2(\mathcal{S}))\cap L^2(0,T;H^1(\mathcal{S}))$$ and
the following integral identity takes a place:
\begin{align}
&(e^{bx} u,v)(T)+\int_0^T\{-(e^{bx} u,v_t)(t)+(e^{bx}
u_x,\big[v_{xx}+(1+2b)v_x\nonumber\\&+(b+b^2)v\big])(t)-\frac{1}{2}(\e u^2,bv+v_x)(t)\nonumber\\
&+(e^{bx} u_y,bv_x+v_{xy})(t)\}\,dt=(e^{bx}
u_0,v(x,y,0)),\label{weakdefin}
\end{align}
where $v \in C^{\infty}(\mathcal{S}_T)$ is an arbitrary function.
\end{teo}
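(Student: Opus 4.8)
The plan is to pass to the limit in the Galerkin scheme \eqref{2.6}--\eqref{2.7} under the weaker hypothesis $u_0\in L^2(\mathcal S)\cap L^2_b(\mathcal S)$, using only the a priori bounds that survive this regularity, namely Estimate~I (i.e.\ \eqref{E1}) and Estimate~II (i.e.\ \eqref{E2} and its consequence \eqref{H1}). First I would record that these two estimates require only $\|u_0\|^2$ and $(e^{2bx},u_0^2)$ to be finite, so they hold uniformly in $N$ for the present data. From \eqref{E1} we get $u^N$ bounded in $L^\infty(0,T;L^2(\mathcal S))$ and $u^N_x$ bounded in $L^2(0,T;L^2(\mathcal S))$; from \eqref{E2} we get $e^{bx}u^N$ bounded in $L^\infty(0,T;L^2(\mathcal S))$ and $e^{bx}\nabla u^N$ bounded in $L^2(0,T;L^2(\mathcal S))$, hence $e^{bx}u^N$ bounded in $L^2(0,T;H^1(\mathcal S))$.

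Next I would extract a subsequence (not relabeled) converging weakly-$*$ in $L^\infty(0,T;L^2(\mathcal S))$ and weakly in $L^2(0,T;H^1(\mathcal S))$ for $e^{bx}u^N$, and weakly in $L^2(0,T;L^2(\mathcal S))$ for $u^N_x$, to limits $u$ and $e^{bx}u$. To handle the nonlinear term I would need strong convergence: testing \eqref{2.6} against admissible functions shows $\partial_t(e^{bx}u^N)$ is bounded in $L^2(0,T;H^{-2}(\mathcal S_r))$ for each finite $r$ (the dispersive and dissipative terms cost at most two derivatives in the dual norm, and the quadratic term is controlled by \eqref{H1} via Proposition~\ref{GN}\,(ii) applied on $\mathcal S_r$). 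An Aubin--Lions argument on each bounded cylinder $\mathcal S_r\times(0,T)$, combined with a diagonal extraction over $r\to\infty$, then gives $u^N\to u$ strongly in $L^2_{loc}$, and the exponential weight lets one upgrade this to $e^{bx}(u^N)^2\to e^{bx}u^2$ strongly in $L^1(0,T;L^1(\mathcal S))$: split the spatial integral at $x=-r$, control the tail $x<-r$ uniformly in $N$ by \eqref{E1} and the smallness of $e^{bx}$ there, and use local strong convergence on $x>-r$.

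With these convergences in hand I would derive the weak formulation \eqref{weakdefin} as follows: multiply the $j$-th equation of \eqref{2.6} by $\Psi(x)$ (an arbitrary $H^1(\mathbb R)$ function), sum against coefficients $\alpha_j$ to form $\Phi^N(y)=\sum_{j=1}^N\alpha_j w_j(y)$, integrate over $\mathcal S\times(0,T)$ against $v=\Phi^N(y)\Psi(x)\theta(t)$ with $\theta\in C^\infty([0,T])$, and integrate by parts in $t$ and in $x$ to move all spatial derivatives off $u$; the weight $e^{bx}$ produces exactly the lower-order terms $(1+2b)v_x$, $(b+b^2)v$, $bv_x+v_{xy}$, and $bv+v_x$ displayed in \eqref{weakdefin} (the coefficient bookkeeping is the same as in Estimates II--IV, since $(e^{bx}\,\cdot\,)_x$ and $(e^{bx}\,\cdot\,)_{xx}$ generate these constants). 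Pass to the limit $N\to\infty$: the linear terms converge by weak convergence since the test function is fixed once $\Phi^N$ is fixed, and the nonlinear term converges by the strong convergence just established. Finally, since finite sums $[\sum_j\alpha_j w_j(y)]\Psi(x)\theta(t)$ are dense in $C^\infty(\mathcal S_T)$ in the relevant topology (this uses completeness of $\{w_j\}$ in $L^2(0,B)$ together with standard density in $x$ and $t$), \eqref{weakdefin} extends to all $v\in C^\infty(\mathcal S_T)$, and the term $(e^{bx}u_0,v(x,y,0))$ comes from the $t$-integration by parts once one checks $e^{bx}u^N(\cdot,0)\to e^{bx}u_0$ in $L^2(\mathcal S)$, which follows from \eqref{2.7} and $u_0\in L^2_b$.

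The main obstacle is the passage to the limit in the quadratic term in the \emph{weighted} space: weak $H^1$ bounds alone do not suffice, so the Aubin--Lions compactness must be run on the truncated domains $\mathcal S_r$ and then the exponential tail for $x\to-\infty$ must be absorbed uniformly in $N$ using \eqref{E1}. This is precisely the point where the reference to \cite{kato} in the regular case is invoked, and the same localization-plus-tail-estimate scheme carries it through here.
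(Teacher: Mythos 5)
Your proposal is correct in its analytic content but follows a genuinely different route from the paper. The paper does not return to the Galerkin scheme with rough data: it approximates $u_0\in L^2(\mathcal S)\cap L^2_b(\mathcal S)$ by smooth functions $u_{0m}$ satisfying the hypotheses of Theorem \ref{regsol}, invokes that theorem to produce regular solutions $u^m$, writes the integral identity \eqref{weakdefin} for each $u^m$ (legitimate because each $u^m$ is regular), and passes to the limit $m\to\infty$ using only the bounds that depend solely on $\|u_0\|$ and $(e^{2bx},u_0^2)$. Your route --- sending $N\to\infty$ in \eqref{2.6}--\eqref{2.7} directly with $u_0\in L^2\cap L^2_b$ --- meets one technical obstruction that the paper's route avoids: the solvability of the Cauchy problem \eqref{2.6}--\eqref{2.7} is guaranteed by the cited KdV theory only for $g_j(x,0)\in H^s$, $s\ge 3$, so with merely $L^2$ data the approximate solutions $u^N$ are not known to exist; you would have to mollify $u_0$ in $x$ as well, at which point your scheme becomes a double approximation that essentially reproduces the paper's. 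On the other hand, your proposal makes explicit what the paper leaves implicit, namely the passage to the limit in the quadratic term: the paper's one-line appeal to \eqref{E1}, \eqref{Ert} and \cite{kato} hides exactly the localization-plus-tail argument you spell out (Aubin--Lions on $\mathcal S_r\times(0,T)$ via a uniform bound on the time derivative in a negative Sobolev norm, diagonal extraction in $r$, and absorption of the region $x<-r$ by the exponential weight together with \eqref{E1}). Note moreover that the right-hand side of \eqref{Ert} is controlled by $(\e,|\nabla u_0|^2)$ and hence is \emph{not} uniform in the approximation parameter for data that are only in $L^2\cap L^2_b$; the uniform local $H^1$ bound must instead come from \eqref{H1}, exactly as you use it. So your compactness argument is the correct supplement to the paper's sketch; just anchor it to the sequence of regular solutions $u^m$ with smoothed data rather than to the Galerkin approximations with rough data.
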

\begin{proof}
In order to justify our calculations, we must operate with
sufficiently smooth solutions $u^m(x,y,t)$. With this purpose, we
consider first initial functions $u_{0m}(x,y)$,  which satisfy
conditions of Theorem \ref{regsol}, and obtain estimates \eqref{E1},
\eqref{Ert} for functions $u^m(x,y,t)$. This allows us to pass to
the limit as $m\to \infty$ in the following identity:
\begin{align}
&(e^{bx} u^m,v)(T)+\int_0^T\{-(e^{bx} u^m,v_t)(t)+(e^{bx}
u^m_x,\big[v_{xx}+(1+2b)v_x\nonumber\\&+(b+b^2)v\big])(t)-\frac{1}{2}(\e |u^m|^2,bv+v_x)(t)\nonumber\\
&+(e^{bx} u^m_y,bv_x+v_{xy})(t)\}\,dt=(e^{bx} u_{0m},v(x,y,0))
\label{weakdefin}
\end{align}

and come to \eqref{weakdefin}.
\end{proof}

{\bf  Uniqueness of a weak solution.}

\begin{teo} \label{weakuniq}
A weak solution of Theorem \ref{weakexist} is uniquely defined.
\end{teo}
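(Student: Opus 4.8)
The natural strategy is the classical energy argument for the difference of two solutions, exactly parallel to the proof of Theorem \ref{uniq}, but carried out at the weak (i.e. $L^2$) regularity level. Let $u_1,u_2$ be two weak solutions with the same initial data and set $z=u_1-u_2$. Then $z$ formally solves the linear-in-$z$ problem \eqref{u1}--\eqref{u3}, namely $z_t-z_{xx}+z_{xxx}+z_{xyy}+\tfrac12\big((u_1+u_2)z\big)_x=0$, with homogeneous Dirichlet data and $z(x,y,0)=0$. The plan is to test this equation against $2e^{2bx}z$ and reproduce the differential inequality \eqref{u5}:
$$
\frac{d}{dt}(\e,z^2)(t)+(2+2b)(\e,z_x^2)(t)+b(\e,z_y^2)(t)\leq C(b)\big[1+\|u_1\|^2+\|u_2\|^2+\|u_{1x}\|^2+\|u_{2x}\|^2\big](\e,z^2)(t),
$$
using the Gagliardo--Nirenberg inequality \eqref{p1} to absorb the two nonlinear contributions $I_1=(\e(u_{1x}+u_{2x}),z^2)$ and $I_2=b(\e(u_1+u_2),z^2)$ into the good terms on the left, just as in Section \ref{regexist}. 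Since $u_i\in L^\infty(0,T;L^2(\mathcal{S}))$ and $u_{ix}\in L^2(0,T;L^2(\mathcal{S}))$ by Theorem \ref{weakexist}, the coefficient on the right is in $L^1(0,T)$, and Gronwall's lemma with $z(\cdot,0)=0$ forces $(\e,z^2)(t)=0$ for all $t\in(0,T)$, hence $u_1=u_2$ a.e. in $\mathcal{S}_T$.

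The key steps, in order, are: (i) write the equation for $z$ and note that $u_1^2-u_2^2=(u_1+u_2)z$; (ii) justify that the formal computation of $\tfrac{d}{dt}(\e,z^2)$ is legitimate — this is where the weak regularity matters; (iii) estimate $I_1,I_2$ via \eqref{p1} exactly as in the proof of Theorem \ref{uniq}; (iv) arrive at \eqref{u5}, invoke the integrability of the coefficient, and apply Gronwall. Steps (i), (iii), (iv) are verbatim repetitions of the regular-solution uniqueness proof and require no new ideas.

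The main obstacle is step (ii): for weak solutions, $z$ only has $e^{bx}z\in L^\infty(0,T;L^2)\cap L^2(0,T;H^1)$, so $z$ itself need not be smooth enough for the pointwise-in-$t$ manipulations (integration by parts producing $z_{xxx}$, $z_{xyy}$ terms) to be directly meaningful, and $\tfrac{d}{dt}(\e,z^2)$ must be interpreted in the distributional sense. The standard remedy — which I would use — is to regularize: either (a) run the estimate on the Galerkin/smooth approximations $u_i^m$ from the proof of Theorem \ref{weakexist}, obtain the inequality \eqref{u5} for the approximate differences, and pass to the limit; or (b) test the weak identity \eqref{weakdefin} with a mollified version of $z$ in $x$ (e.g. $v=e^{2bx}(\rho_\varepsilon * z)$ suitably symmetrized), use the commutator/Friedrichs lemma to control the mollification error in the transport term $\tfrac12((u_1+u_2)z)_x$, and remove the mollifier. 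Approach (a) is cleaner here since the approximating sequence is already available from Theorem \ref{weakexist} and satisfies all the estimates uniformly; one only needs lower semicontinuity of the norms under weak convergence to retain \eqref{u5} in the limit, after which Gronwall closes the argument as above.
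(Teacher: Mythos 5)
Your proposal is correct and, in its preferred variant (a), coincides with the paper's own argument: approximate the data by regular functions, run the energy estimate \eqref{u5} on the resulting regular solutions exactly as in Theorem \ref{uniq}, control the Gronwall coefficient via the uniform bound \eqref{E1}, and pass to the limit using weak lower semicontinuity of the norm. You correctly identify the regularity obstruction as the only non-verbatim step, and your resolution of it is the one the paper uses.
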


\begin{proof}
Actually, this proof is provided by Theorem \ref{uniq}. It is
sufficient to approximate the initial function $u_0\in
L^2(\mathcal{S})$ by regular functions $u_{0m}$ in the form:
$$\lim_{m\to\infty}\|u_{0m}-u_0\|=0,$$
where $u_{om}$ satisfies the conditions of Theorem \ref{regsol}.
This guarantees the existence of the unique regular solution to
(2.1)-(2.3) and allows us to repeat all the calculations which have
been done during the proof of Theorem \ref{uniq} and to come to the
following inequality:
\begin{align}
&\frac{d}{dt}(\e,z_m^2)(t)+(2+2b)(\e,z_{mx}^2)(t)+b(\e,z_{my}^2)(t)\nonumber\\
&\leq
C(b)\big[1+\|u_{1m}\|(t)^2+\|u_{2m}\|(t)^2+\|u_{1xm}\|(t)^2+\|u_{2xm}\|(t)^2\big](\e,z_m^2)(t).\nonumber
\end{align}
By the generalized Gronwall`s lemma,
\begin{align}
&(\e,z_m^2)(t)\leq exp\{\int_0^t
C(b)\big[1+\|u_{1m}\|(s)^2+\|u_{2m}\|(s)^2+\|u_{1xm}\|(s)^2\nonumber\\
&+\|u_{2xm}\|(s)^2\big]\,ds\}(\e,z_{0m}^2)(t).\nonumber
\end{align}
Functions $u_{1m}$ and $u_{2m}$ for $m$ sufficiently large satisfy
the estimate
\begin{equation}
\|u_{im}\|(t)^2+2\int_0^t \|u_{imx}\|(s)^2\,ds=\|u_{0m}\|^2\leq
2\|u_0\|^2),\quad i=1,2. \nonumber
\end{equation}

Hence,
\begin{align}
&exp\{\int_0^t
C(b)\big[1+\|u_{1m}\|(s)^2+\|u_{2m}\|(s)^2+\|u_{1xm}\|(s)^2\nonumber\\
&+\|u_{2xm}\|(s)^2\big]\,ds\}\leq C(,T,\|u_0\|).
\end{align}
 Since $e^{bx}z(x,y,t)$ is a weak limit of regular
solutions $\{e^{bx}z_m(x,y,t)\}$, then $$(\e,z^2)(t)\leq (\e,
z_m^2)(t)= 0.$$ This implies $u_1\equiv u_2$\; $a.e.$ in
$\mathcal{S}_T.$ The proof of Theorem \ref{weakuniq} is complete.
\end{proof}
\begin{rem}
Changing initial condition $z(x,y,0)\equiv 0$  for
$z(x,y,0)=z_0(x,y)\ne 0,$ and repeating the proof of Theorem
\ref{weakuniq}, we obtain  that
$$(\e,z^2)(t)\leq C(b,T,\|u_0\|)(\e,z^2_0)\quad \forall t\in(0,T).$$
This means continuous dependence of weak solutions on initial data.
\end{rem}

{\bf  Decay of weak solutions.}

\begin{teo} \label{weakdecay}
Let  $b\in(0, \frac{1}{5}[-1+\sqrt{1+\frac{5\pi^2}{4B^2}}]),\;
\|u_0\|\leq \frac{3\pi}{16B}$ and $u(x,y,t)$ be a weak solution of
\eqref{2.1}-\eqref{2.3}. Then for all finite $B>0$ the following
inequality is true:
\begin{equation}
 \|e^{bx}u\|^2(t)\leq e^{-\chi t}\|e^{bx}u_0\|^2(0),
\label{wdecay}
\end{equation}
 where $\chi=\frac{\pi^2}{20B^2}[-1+\sqrt{1+\frac{5\pi^2}{4B^2}}]$.
\end{teo}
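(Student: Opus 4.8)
The plan is to mimic the proof of Theorem \ref{decay1}, but to work with the weak solution directly through the energy identity it satisfies rather than with pointwise-in-$t$ differentiation. Since a weak solution need not a priori admit the multiplier $2e^{2bx}u$, I would first regularize: approximate $u_0$ in $L^2(\mathcal{S})\cap L^2_b(\mathcal{S})$ by functions $u_{0m}$ satisfying the hypotheses of Theorem \ref{regsol}, so that the corresponding regular solutions $u^m$ obey the conclusion of Theorem \ref{decay1}, namely $\|e^{bx}u^m\|^2(t)\le e^{-\chi t}\|e^{bx}u^m_0\|^2(0)$ for each $m$. The constants $b$ and $\chi$ here are exactly those produced by the algebraic optimization \eqref{algebra}, so the same choice $b=\frac15[-1+\sqrt{1+\frac{5\pi^2}{4B^2}}]$, $\chi=b\frac{\pi^2}{4B^2}=\frac{\pi^2}{20B^2}[-1+\sqrt{1+\frac{5\pi^2}{4B^2}}]$ works, provided the smallness condition on $\|u_0\|$ is met.

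The one genuine point of care is that the hypothesis here is $\|u_0\|\le\frac{3\pi}{16B}$, which is half of the bound $\frac{3\pi}{8B}$ required in Theorem \ref{decay1}. The reason is that the approximating initial data $u_{0m}$ converge to $u_0$ only in the $L^2$-norm, so for $m$ large one only controls $\|u_{0m}\|\le 2\|u_0\|\le\frac{3\pi}{8B}$ (this is precisely the doubling already used, e.g., in \eqref{E1} and in the uniqueness proof). Thus each $u^m$ satisfies the hypotheses of Theorem \ref{decay1} with $\|u_{0m}\|$ in place of $\|u_0\|$, and hence the decay estimate with the stated $\chi$ — note $\chi$ depends only on $b$ and $B$, not on the size of the data, once positivity of $\chi$ is secured. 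So I would invoke Theorem \ref{decay1} for the sequence $u^m$ to get, for all $m$ large and all $t\in(0,T)$,
\begin{equation*}
\|e^{bx}u^m\|^2(t)\le e^{-\chi t}\|e^{bx}u_{0m}\|^2(0).
\end{equation*}

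The final step is passage to the limit $m\to\infty$. From the uniform estimates \eqref{E1} and \eqref{Ert} established in the construction of weak solutions (Theorem \ref{weakexist}), the sequence $e^{bx}u^m$ converges weakly-$*$ in $L^\infty(0,T;L^2(\mathcal{S}))$ to $e^{bx}u$, and by uniqueness (Theorem \ref{weakuniq}) the limit is the weak solution with data $u_0$. By weak lower semicontinuity of the norm, $\|e^{bx}u\|^2(t)\le\liminf_m\|e^{bx}u^m\|^2(t)\le\liminf_m e^{-\chi t}\|e^{bx}u_{0m}\|^2(0)=e^{-\chi t}\|e^{bx}u_0\|^2(0)$, where the last equality uses $u_{0m}\to u_0$ in $L^2_b(\mathcal{S})$, which one arranges in the approximation. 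This yields \eqref{wdecay}. I do not expect a serious obstacle; the only thing to be careful about is that the approximation be chosen to converge in the weighted norm $L^2_b$ as well as in $L^2$, and that the smallness bound $\frac{3\pi}{16B}$ is exactly what makes $2\|u_0\|\le\frac{3\pi}{8B}$ so that Theorem \ref{decay1} applies to the approximants.
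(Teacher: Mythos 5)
Your proposal is correct and follows essentially the same route as the paper: approximate $u_0$ by smooth data satisfying the hypotheses of Theorem \ref{regsol}, apply the decay estimate of Theorem \ref{decay1} to the regular approximants $u^m$ (the halved bound $\|u_0\|\le\frac{3\pi}{16B}$ serving precisely to guarantee the approximants meet the smallness condition, since only \eqref{E1} rather than \eqref{E1l} is available), and pass to the limit by weak lower semicontinuity of the weighted $L^2$-norm. Your treatment of the limit of the right-hand side via convergence in $L^2_b(\mathcal{S})$ is, if anything, slightly more careful than the paper's.
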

\begin{proof}
Similarly to the proof of the uniqueness result for a weak solution,
we approximate $u_0 \in L^2(\mathcal{S})$ by sufficiently smooth
functions $u_{om}$ in order to work with regular solutions. Acting
in the same manner as by the proof of Theorem 4.1, we come to the
following inequality :
\begin{equation}
 \|e^{bx}u_m\|^2(t)\leq e^{-\chi t}\|e^{bx}u_0\|^2(0),
\label{wdecay}
\end{equation}
 where
 $$\chi=\frac{\pi^2}{20B^2}[-1+\sqrt{1+\frac{5\pi^2}{4B^2}}].$$
Since $u(x,y,t)$ is weak limit of regular solutions $\{u_m(x,y,t)\}$
then
$$(\e, u^2)(t)\leq (\e, u_m^2)(t)\leq e^{-\chi t}(\e,u_0^2).$$
The proof of Theorem \ref{weakdecay} is complete.
\end{proof}

We have in this Theorem a more strict condition  $\|u_0\|\leq
\frac{3\pi}{16B}$ instead of $\|u_0\|\leq \frac{3\pi}{8B}$ in the
case of decay for regular solution because  for weak solutions we do
not have the sharp estimate \eqref{E1l}, but only \eqref{E1}.

\end{document}